\newtheorem{definition}{\bf Definition}[section]
\newtheorem{lemma}{\bf Lemma}[section]
\newtheorem{theorem}{\bf Theorem}[section]
\newtheorem{remark}{\bf Remark}[section]
\newtheorem{corollary}{\bf Corollary}[section]
\newtheorem{proposition}{\bf Proposition}[section]
\newtheorem{example}{\bf Example}[section]
\begin{document}
\setcounter{page}{1}

\title{{\textbf{
Some further construction methods  for  uninorms  via  uninorms
}}\thanks {Supported by National Natural Science
Foundation of China (No.11871097, 12271036)}}
\author{Zhenyu Xiu$^1$\footnote{\emph{E-mail address}: xzy@cuit.edu.cn }, Xu Zheng$^2$\footnote{Corresponding author. \emph{E-mail address}: 3026217474@qq.com }\\
\emph{\small $^{1,2}$  College of Applied Mathematics, Chengdu University of Information Technology, }\\
\emph{\small Chengdu 610000,  China }}
\newcommand{\pp}[2]{\frac{\partial #1}{\partial #2}}
\date{}
\maketitle

\begin{quote}
{\bf Abstract}

In this paper,  we further investigate new construction methods for uninorms on  bounded lattices  via  given uninorms.
  More specifically,   we first construct new uninorms on  arbitrary  bounded lattices   by extending a given uninorm  on a subinterval of the lattices under necessary and sufficient conditions on the given uninorm.  Moreover, based on the new uninorms,  we can give another  sufficient and necessary condition under which $S_{1}^{*}$ is a $t$-conorm ($T_{1}^{*}$ is a $t$-norm) in \cite{SS06}. Furthermore,  using  closure operators (interior operators), we also provide new construction methods for uninorms  by extending the given uninorm  on a subinterval of  a bounded lattice  under some additional constraints and  simultaneously  investigate the additional constraints carefully and systematically.
  Meanwhile,   some illustrative examples for the above construction methods of uninorms on bounded lattices are provided.

{\textbf{Keywords}:}\ bounded lattices; closure operators;  interior operators; $t$-conorms; uninorms
\end{quote}

\section{Introduction}\label{intro}

Yager and Rybalor \cite{RR96} introduced the notions of uninorms with a neutral element in the interior of the unit interval $[0,1]$ which are generalizations of $t$-norms and $t$-conorms. These operators also have been proved to paly an important role in other fields, such as neural networks, decision-making, expert systems and so on (see, e.g., \cite{BD99,MG09,MG11,MG15,PH85,PH92,WP07,EH76,RR01,RR03}).


Since the bounded lattice  $L$  is more general than  $[0,1]$, the studies of uninorms on $[0,1]$  have been extended to  $L$ .
Uninorms on $L$, were first  proposed in \cite{FK15}
as a unification of $t$-norms and $t$-conorms on $L$.
Since then, a lot of researchers have used many tools to construct  uninorms on the bounded lattices, such as $t$-norms ($t$-conorms) (see, e.g., \cite{EA21,EA22,SB14,GD16,GD17,GD018b,GD19,GD19.1,GD19.2,GD20,YD19,YD20,FK15,AX20}), closure operators (interior operators) (see, e.g., \cite{GD21,GD23,XJ21,YO20,BZ21}), $t$-subnorms ($t$-subconorms) (see, e.g., \cite{XJ20, WJ21, ZY23, HP21}),  additive generators \cite{HeP} and uninorms (see, e.g., \cite{GD22,ZX23}).

 More specifically, in \cite{GD22} and \cite{ZX23},
 the  researchers introduced  new approaches to  construct uninorms on  $L$  via given uninorms defined on a subinterval of  $L$, respectively.
In \cite{GD22},  G.D. \c{C}ayl{\i}  et al.  obtained new uninorms based on the presence of  $t$-norms ($t$-conorms) and   uninorms  defined on a  subinterval of  $L$ under some  additional constraints on $L$.
In \cite{ZX23}, Xiu and Zheng  gave new methods to yield uninorms on bounded lattices using  the presence of   $t$-superconorms ($t$-subnorms) and uninorms under some  additional constraints.  As we see, in fact, the above methods to construct uninorms both started from a given  uninorm  on a subinterval of  a bounded lattice  and  then  provided  a novel perspective to study the constructions of  uninorms.

In this paper, we still study the construction methods for uninorms via   uninorms defined on the subinterval $[0,a]$ (or $[b,1]$) of $L$.
 In section 3,  we can construct new uninorms by extending a given uninorm $U^{*}$ on a
subinterval  of $L$ under necessary and sufficient conditions on $U^{*}$.
The resulting uninorms can   provide  another  sufficient and necessary condition under which $S_{1}^{*}$ is a $t$-conorm ($T_{1}^{*}$ is a $t$-norm) in \cite{SS06}.
 In section 4,  based on  closure operators (interior operators),  we can  obtain new uninorms  by extending a given uninorm $U^{*}$ on a
subinterval of $L$.  Moreover, In case of   $e=b$,   $e=a$ or $e=0$,  all uninorms, constructed in sections 3 and  4, are the existed results in the literature.
  Meanwhile,  we  provide some illustrative examples for the above construction methods of uninorms.

\section{Preliminaries}

In this section, we recall some conceptions and  results,  which will be used in this manuscript.


\begin{definition}[\cite{GB67}]\label{de2.1}
A lattice $(L,\leq)$  is bounded if it has top element 1 and bottom  element 0, that is, there exist $1,0\in L$  such that $0\leq x\leq 1$ for all $x\in L$.
\end{definition}

Throughout this article, unless stated otherwise, we denote $L$  as a bounded lattice in the above definition.

\begin{definition}[\cite{GB67}]\label{de2.2}
Let  $a,b\in L$ with $a\leq b$. A subinterval $[a,b]$ of $L$ is defined as
$$[a, b]=\{x\in L: a\leq x \leq b\}.$$
Similarly, we can define $[a, b)=\{x\in L: a\leq x < b\}, (a, b]=\{x\in L: a< x \leq b\}$ and $(a, b)=\{x\in L: a< x < b\}$. If $a$  and $b$  are incomparable, then we use the notation $a\parallel b$. If $a$  and $b$  are comparable, then we use the notation $a\nparallel b$.
\end{definition}

Moreover, $I_{a}$ denotes the set of all incomparable elements with $a$, that is, $I_{a}=\{x\in L\mid x\parallel a\}$. $I_{a}^{b}$ denotes the set of elements that are incomparable with $a$ but comparable with $b$, that is, $I_{a}^{b}=\{x\in L\mid x \parallel a\ and\ x\nparallel b\}$.   $I_{a,b}$ denotes the set of elements that are incomparable with both $a$ and $b$, that is, $I_{a,b}=\{x\in L \mid x \parallel a\ and \ x \parallel b\}$. Obviously, $I_{a}^{a}=\emptyset$ and $I_{a,a}=I_{a}$.

\begin{definition}[\cite{SS06}]\label{de2.3}
If  an operation $T:L^{2}\rightarrow L$  is associative, commutative and increasing with respect to both variables, and   has the neutral element $1\in L$, that is, $T(1,x)=x$   for all $x\in L$, then it is called a $t$-norm on $L$.

\end{definition}

\begin{definition}[\cite{GD16}]\label{}
If an operation $S:L^{2}\rightarrow L$ is associative, commutative and increasing with respect to both variables, and   has the neutral element $0\in L$, that is, $S(0,x)=x$  for all $x\in L$,  then it is called a $t$-conorm on  $L$.
\end{definition}

\begin{definition}[\cite{FK15}]\label{de2.4}
If an operation $U:L^{2}\rightarrow L$ is  associative, commutative and increasing with respect to both variables, and   has the neutral element $e\in L$, that is, $U(e,x)=x$  for all $x\in L$,  then it  is called a uninorm on  $L$.
\end{definition}

\begin{definition}[\cite{FK15}]\label{de34}
Let $U$ be a uninorm on $L$ with the neutral element $e\in L\setminus\{0,1\}$. Then the following results hold obviously.\\
$(1)$ $T_{e}=U\mid [0,e]^{2}\rightarrow [0,e]$ is a $t$-norm on $[0,e]$.\\
$(2)$ $S_{e}=U\mid [e,1]^{2}\rightarrow [e,1]$ is a $t$-conorm on $[e,1]$.
\end{definition}

\begin{definition}[\cite{GD16}]\label{de32}
Let $U$ be a uninorm on $L$ with the neutral element $e\in L\setminus\{0,1\}$.\\
$(1)$ If  an element $x\in L$ satisfies $U(x,x)=x$, then it is called an idempotent element of $U$.\\
$(2)$ If a uninorm $U$  satisfies  $U(x,x)=x$ for all $x\in L$, then it   is called an idempotent uninorm on $L$.
\end{definition}

\begin{definition}[\cite{GD16}]\label{de33}
Let $U$ be a uninorm on $L$ with the neutral element $e\in L\setminus\{0,1\}$.\\
$(1)$ If $U(0,1)=0$, then $U$ is a conjunctive uninorm.\\
$(2)$ If $U(0,1)=1$, then $U$ is a disjunctive uninorm.
\end{definition}

\begin{definition}[\cite{CJ44}]
A mapping $cl: L^{2}\rightarrow L$ is called  a closure operator on $L$ if, for all $x,y\in L$, it satisfies the following conditions:\\
$(1)$ $x\leq cl(x)$;\\
$(2)$ $cl(x\vee y)= cl(x)\vee cl(y)$;\\
$(3)$ $cl(cl(x))= cl(x)$.
\end{definition}

\begin{definition}[\cite{YO20}]
A mapping $int: L^{2}\rightarrow L$ is called  an interior operator on $L$ if, for all $x,y\in L$, it satisfies the following  conditions:\\
$(1)$ $int(x)\leq x$;\\
$(2)$ $int(x\wedge y)= int(x)\wedge int(y)$;\\
$(3)$ $int(int(x))= int(x)$.
\end{definition}

\begin{definition}[\cite{HP21}]\label{de35}
In \cite{HP21},  two classes of uninorms  $\mathcal{U}_{min}^{*}$ and  $\mathcal{U}_{max}^{*}$  on  bounded lattices were  introduced  by  H.-P. Zhang et al.,  where
 $\mathcal{U}_{min}^{*}$  denotes  the class of all uninorms $U$ on $L$ with neutral element $e$ satisfying  $U(x,y)=y$ for all $(x,y)\in(e,1]\times [0,e)$ and    $\mathcal{U}_{max}^{*}$ denotes   the class of all uninorms $U$ on $L$ with neutral element $e$ satisfying
$U(x,y)=y$ for all $(x,y)\in[0,e)\times (e,1]$.
\end{definition}

\begin{proposition}[\cite{WJ21}]\label{pro2.1}
Let $S$ be a nonempty set and $A_{1},A_{2},\ldots,A_{n}$ be subsets of $S$. Let $H$ be a commutative binary operation on $S$. Then $H$ is associative on $A_{1}\cup A_{2}\cup\ldots\cup A_{n}$ if and only if all of the following statements hold:\\
$\mathrm{(i)}$ for every combination $\{i,j,k\}$ of size $3$ chosen from $\{1,2,\ldots,n\}$, $H(x,H(y,z))=H(H(x,y),z)=H(y,H(x,z))$ for all $x\in A_{i},y\in A_{j},z\in A_{k}$;\\
$(ii)$ for every combination $\{i,j\}$ of size $2$ chosen from $\{1,2,\ldots,n\}$, $H(x,H(y,z))=H(H(x,y),z) $ for all $x\in A_{i},y\in A_{i},z\in A_{j}$;\\
$(iii)$ for every combination $\{i,j\}$ of size $2$ chosen from $\{1,2,\ldots,n\}$, $H(x,H(y,z))=H(H(x,y),z) $ for all $x\in A_{i},y\in A_{j},z\in A_{j}$;\\
$(iv)$ for every $i\in \{1,2,\ldots,n\}$, $H(x,H(y,z))=H(H(x,y),z) $ for all $x,y,z\in A_{i}$.
\end{proposition}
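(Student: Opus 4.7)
The plan is to reduce associativity on $A_{1}\cup A_{2}\cup\cdots\cup A_{n}$ to verifying the single equation $H(x,H(y,z))=H(H(x,y),z)$ for each triple $(x,y,z)$ in the union, and then to partition such triples according to the multiset of indices $\{i,j,k\}$ for which $x\in A_{i}$, $y\in A_{j}$, $z\in A_{k}$. Each pattern will then be matched to one of (i)--(iv), with commutativity of $H$ used to absorb the patterns that the hypotheses do not list verbatim.

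The forward implication is a routine restriction argument: if $H$ is associative on the union, then (ii), (iii), and (iv) are simply the associativity equation imposed on the specific index patterns $(i,i,j)$, $(i,j,j)$, and $(i,i,i)$ respectively. The middle equality in (i) is again just associativity on the pattern $(i,j,k)$ with pairwise distinct indices; the extra equality $H(H(x,y),z)=H(y,H(x,z))$ then follows by combining commutativity and associativity, namely $H(x,H(y,z))=H(x,H(z,y))=H(H(x,z),y)=H(y,H(x,z))$.

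For the reverse direction, I would assume (i)--(iv) and take any $x\in A_{i}$, $y\in A_{j}$, $z\in A_{k}$ in the union. If $i,j,k$ are pairwise distinct, the middle equality in (i) gives associativity directly. If $i=j=k$, then (iv) applies. If exactly two indices coincide, three subcases remain: the patterns $(i,i,j)$ and $(i,j,j)$ with $i\neq j$ are handled verbatim by (ii) and (iii) respectively.

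The main obstacle is the remaining subcase $i=k\neq j$, where the pattern $(i,j,i)$ is not literally listed in the hypotheses. I would handle it by rewriting both sides through commutativity until they reduce to configurations covered by (ii) and (iii). Concretely, $H(x,H(y,z))=H(x,H(z,y))=H(H(x,z),y)=H(y,H(x,z))$ by (ii) applied to $(x,z,y)$ of type $(i,i,j)$, while $H(H(x,y),z)=H(H(y,x),z)=H(y,H(x,z))$ by (iii) applied to $(y,x,z)$ of type $(j,i,i)$ (which matches (iii) after renaming the pair $\{i,j\}$). Equating the two expressions yields the desired identity. The argument is purely combinatorial; the only subtlety is recognising that commutativity of $H$ forces the mixed pattern $(i,j,i)$ back into the two two-index patterns the hypotheses cover explicitly, which is presumably why (ii) and (iii) alone suffice.
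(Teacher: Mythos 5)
The paper does not actually prove Proposition \ref{pro2.1}; it is imported verbatim from \cite{WJ21} as a quoted tool, so there is no in-paper proof to compare your argument against. Judged on its own, your proof is correct and complete. The forward direction is indeed just restriction plus the observation that the extra equality in (i) follows from associativity and commutativity. In the reverse direction your case split on the index pattern of $(x,y,z)$ is exhaustive, and you correctly identify the only non-trivial point: the pattern $(i,j,i)$ is not listed among (i)--(iv), and must be recovered by commuting arguments so that both $H(x,H(y,z))$ and $H(H(x,y),z)$ are rewritten as $H(y,H(x,z))$, using (ii) on the reordered triple $(x,z,y)$ and (iii) on $(y,x,z)$. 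Since the combinations $\{i,j\}$ in (ii) and (iii) are unordered, your ``renaming the pair'' step is legitimate. The only cosmetic caveat is that when the $A_i$ overlap an element may admit several index assignments, but as you implicitly do, it suffices to fix one assignment per element and argue by the resulting pattern.
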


\begin{theorem}[\cite{SS06}]\label{th250}
Let $b, a\in L\setminus \{0,1\}$.\\
$(1)$ For a
$t$-norm $V:[b, 1]^{2}\rightarrow[b, 1]$,  an ordinal sum
extension $T_{1}^{*}$ of $V$ to $L$ defined by

$T_{1}^{*}(x,y)=\begin{cases}
V(x, y) &\mbox{if } (x,y)\in [b,1]^{2},\\
x\wedge y &\mbox{}otherwise,\\
\end{cases}$\\
is a $t$-norm if and only if $x\parallel y$ for all $x\in I_{b}$ and $y\in [b,1)$.\\
$(2)$ For  a
$t$-conorm $W:[0,a]^{2}\rightarrow[0,a]$, an ordinal sum
extension $S_{1}^{*}$ of $W$ to $L$ defined by

$S_{1}^{*}(x,y)=\begin{cases}
W(x, y) &\mbox{if } (x,y)\in [0,a]^{2},\\
x\vee y &\mbox{}otherwise,\\
\end{cases}$\\
is a $t$-conorm if and only if $x\parallel y$ for all $x\in I_{a}$ and $y\in (0,a]$.
\end{theorem}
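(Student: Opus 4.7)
The two parts are formally dual---swap $[0,a]$ with $[b,1]$ and $\vee$ with $\wedge$---so I would prove only (1) and note that the order-reversed argument yields (2).

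For the sufficiency ($\Leftarrow$) I would verify the four $t$-norm axioms. Commutativity and the neutrality of $1$ are immediate from the definition of $T_{1}^{*}$, since $V$ is a $t$-norm on $[b,1]$ and $T_{1}^{*}$ reduces to the meet off $[b,1]^{2}$. For monotonicity I would split on whether each argument lies in $[b,1]$; the delicate case is $x_{1}\leq x_{2}$ with $x_{1}\notin[b,1]$ and $x_{2}\in[b,1]$, and the hypothesis rules out $x_{2}\in[b,1)$ whenever $x_{1}\in I_{b}$, forcing $x_{2}=1$, after which $T_{1}^{*}(x_{1},z)=x_{1}\wedge z\leq z=T_{1}^{*}(x_{2},z)$.

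For associativity I would invoke Proposition~\ref{pro2.1} with $A_{1}=[b,1]$ and $A_{2}=L\setminus[b,1]$. The case $A_{1}^{3}$ follows from associativity of $V$, and $A_{2}^{3}$ from that of $\wedge$. The mixed cases reduce to identities such as $V(x,y)\wedge z=x\wedge y\wedge z$ for $x,y\in[b,1]$ and $z\in L\setminus[b,1]$; the bound $V(x,y)\leq x\wedge y$ gives $V(x,y)\wedge z\leq x\wedge y\wedge z$, and for the reverse direction I would use the hypothesis to exclude $x\wedge y\wedge z\in I_{b}$---such an element would sit below $x\wedge y\in[b,1)$ while being incomparable with $b$, contradicting the condition applied to $x\wedge y$---forcing $x\wedge y\wedge z<b\leq V(x,y)$ and hence equality. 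The remaining mixed patterns go similarly, and the analogous calculations handle cases where two (or all three) arguments lie in $A_{2}$ but trigger the $V$-branch only after intermediate operations land in $[b,1]$.

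For the necessity ($\Rightarrow$) I would argue contrapositively: suppose $x\in I_{b}$ and $y\in[b,1)$ with $x\not\parallel y$. The case $y=b$ contradicts $x\in I_{b}$, and $y\leq x$ would give $b\leq y\leq x$ against $x\parallel b$, so $b<y$ and $x<y$. Then
\begin{align*}
T_{1}^{*}(T_{1}^{*}(x,y),y)&=T_{1}^{*}(x,y)=x,\\
T_{1}^{*}(x,T_{1}^{*}(y,y))&=T_{1}^{*}(x,V(y,y))=x\wedge V(y,y),
\end{align*}
so associativity forces $x\leq V(y,y)$; iterating along the descending chain of repeated $V$-squares of $y$ in $[b,y]$ eventually pushes the lower bound down to $b$, yielding $x\leq b$ and contradicting $x\parallel b$. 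I expect the main obstacle to be the mixed-case associativity check in the sufficiency, because the incomparability hypothesis must be invoked repeatedly---often via an intermediate element sitting below some $w\in[b,1)$ while incomparable with $b$---to prevent iterated meets from re-entering $[b,1)$ or $I_{b}$ in a way that would spoil the required equalities.
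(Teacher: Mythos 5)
The paper does not actually prove Theorem \ref{th250}; it is imported verbatim from \cite{SS06}, so there is no internal proof to compare your attempt against, and I can only assess it on its own terms. Your sufficiency sketch is sound and is essentially the standard argument: the decomposition $A_{1}=[b,1]$, $A_{2}=L\setminus[b,1]$ via Proposition \ref{pro2.1}, the observation that $y\wedge z\notin[b,1]$ whenever $z\notin[b,1]$, and the use of the incomparability hypothesis to rule out $x\wedge y\wedge z\in I_{b}$ (forcing $x\wedge y\wedge z<b\leq V(x,y)$ unless $x\wedge y=1$) are exactly the right ingredients, as is the reduction of the delicate monotonicity case to $x_{1}\in I_{b}$, $x_{2}\in[b,1]$, where the hypothesis forces $x_{2}=1$.

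The necessity argument, however, has a genuine gap. From $T_{1}^{*}(T_{1}^{*}(x,y),y)=x$ and $T_{1}^{*}(x,T_{1}^{*}(y,y))=x\wedge V(y,y)$ you correctly extract $x\leq V(y,y)$, but the claim that iterating the $V$-squares of $y$ ``eventually pushes the lower bound down to $b$'' is unjustified and false in general: if $y$ is an idempotent element of $V$ (in particular if $V=\wedge\vert_{[b,1]^{2}}$), the chain is constant at $y$ and no contradiction ever appears. Worse, the implication you are trying to prove is simply false for a fixed $V$: taking $V=\wedge\vert_{[b,1]^{2}}$ makes $T_{1}^{*}$ the meet on all of $L$, which is a $t$-norm on every bounded lattice regardless of whether the incomparability condition holds. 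The equivalence in \cite{SS06} is really a statement about the lattice, namely that the ordinal sum extension is a $t$-norm for \emph{every} $t$-norm $V$ on $[b,1]$ if and only if the condition holds; the necessity direction must therefore exhibit a witness $V$, e.g.\ the drastic $t$-norm on $[b,1]$ (so that $V(y,y)=b$ for $y<1$), for which $T_{1}^{*}(x,y)=x\not\leq b=T_{1}^{*}(y,y)$ already violates monotonicity when $x\in I_{b}$ and $x<y<1$. Your proof is repaired by making this quantification explicit and choosing such a $V$, rather than arguing for an arbitrary given $V$; the same remark applies to the dual part $(2)$.
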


\section{New construction methods for uninorms via uninorms on bounded lattices}

In this section, we focus on the construction methods for uninorms on a  bounded lattice $L$  by extending a given uninorm $U^{*}$ on a subinterval $[0,a]$ (or $[b,1]$ )  of  $L$   under sufficient and necessary conditions on $U^{*}$.
Based on the new uninorms, we can obtain  another  sufficient and necessary condition under which $S_{1}^{*}$ is a $t$-conorm ($T_{1}^{*}$ is a $t$-norm) in \cite{SS06}.

\begin{theorem}\label{th31}
Let  $U^{*}$ be a uninorm on $[0,a]$ with a neutral element $e\in L$ for   $a\in L\setminus\{0,1\}$. Then $U_{1}(x,y):L^{2}\rightarrow L$ defined by

$U_{1}(x,y)=\begin{cases}
U^{*}(x,y) &\mbox{if } (x,y)\in [0,a]^{2},\\
x &\mbox{if } (x,y)\in I_{e,a}\times [0,e],\\
y &\mbox{if } (x,y)\in [0,e]\times I_{e,a},\\
x\vee y &\mbox{}otherwise,\\
\end{cases}$\\
is a uninorm on $L$ with the neutral element $e$ if and only if $U^{*}$ satisfies  the following conditions: \\
$(1)$ for   $z\in I_{e,a}$, if   $(x,y)\in ((I_{e}^{a}\cup (e,a])\cap \{l\in L\ | \ l \nparallel z\})^{2}$, then  $U^{*}(x, y)\nparallel z  $;\\
$(2)$ for   $z\in I_{e,a}$,  if  $(x,y)\in[0,a]\times((I_{e}^{a}\cup (e,a])\cap\{l\in L\ |\ l \parallel z\})\cup((I_{e}^{a}\cup (e,a])\cap\{l\in L\ |\ l \parallel z\})\times[0,a]$, then $U^{*}(x, y)\parallel z $;\\
$(3)$ for   $z\in I_{a}^{e}$, if   $(x,y)\in ((I_{e}^{a}\cup (e,a])\cap \{l\in L\ | \ l \nparallel z\})^{2}$, then  $U^{*}(x, y)\nparallel z $;\\
$(4)$ for   $z\in I_{a}^{e}$,  if  $(x,y)\in[0,a]\times((I_{e}^{a}\cup (e,a])\cap\{l\in L\ |\ l \parallel z\})\cup((I_{e}^{a}\cup (e,a])\cap\{l\in L\ |\ l \parallel z\})\times[0,a]$, then $U^{*}(x, y)\parallel z  $.
\end{theorem}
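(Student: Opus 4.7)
My first step will be to dispose of the easy axioms. Commutativity follows directly from the symmetry of the piecewise definition together with the commutativity of $U^{*}$ and $\vee$. For the neutral element, I check $U_{1}(e,y)=y$ on each part of the partition
\[
L = [0,e]\cup(e,a]\cup I_{e}^{a}\cup(a,1]\cup I_{a}^{e}\cup I_{e,a}.
\]
On $[0,a]=[0,e]\cup(e,a]\cup I_{e}^{a}$ the identity $U^{*}(e,y)=y$ applies; on $I_{e,a}$ we fall into the case $[0,e]\times I_{e,a}$, giving $U_{1}(e,y)=y$; and on $(a,1]\cup I_{a}^{e}$ we have $y\geq e$ (for $y\in I_{a}^{e}$, note that $y\parallel a$ together with $y\nparallel e$ forces $y>e$, since $y\leq e$ would give $y\leq a$), so the otherwise-branch delivers $U_{1}(e,y)=e\vee y=y$. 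This reduces sufficiency to proving monotonicity and associativity.

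For monotonicity I use a direct case analysis on the six-element partition above. The only delicate situations arise when some input $z$ lies in $I_{e,a}$ or $I_{a}^{e}$, because every $w\in[0,a]$ with $w\nparallel z$ is forced to satisfy $w\leq z$ (otherwise $z\leq a$, contradicting $z\parallel a$). In each such case, the required monotonicity of $U_{1}(\cdot,z)$ demands that $U^{*}$ not produce a value violating the required comparability with $z$, which is exactly what conditions~(1) and~(3) ensure. For associativity I invoke Proposition~\ref{pro2.1} with the same partition. Combinations in which at least two of the three arguments lie outside $[0,a]$ reduce, after handling the projection branches on $[0,e]\times I_{e,a}$, to identities among joins, which are automatic. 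The remaining combinations have the shape $U_{1}(U^{*}(x,y),z)$ versus $U_{1}(x,U_{1}(y,z))$ with $x,y\in[0,a]$ and $z\in I_{e,a}\cup I_{a}^{e}$, and here $U^{*}(x,y)$ must fall in the same branch (\emph{$\leq z$}, \emph{$\parallel z$}, or \emph{$\leq e$}) as the outer evaluation demands; this is exactly the content of conditions~(1)--(4). For instance, when $y\in(e,a]$ (so $y\parallel z$ automatically) and $z\in I_{e,a}$, one computes $U_{1}(x,U_{1}(y,z))=U_{1}(x,y\vee z)=x\vee y\vee z$, and the left-hand side matches only if $U^{*}(x,y)\in((e,a]\cup I_{e}^{a})\cap\{l\parallel z\}$, which is condition~(2).

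For necessity, assume $U_{1}$ is a uninorm. To derive condition~(1), take $z\in I_{e,a}$ and $x,y\in(I_{e}^{a}\cup(e,a])\cap\{l\nparallel z\}$; as already noted, $x,y\leq z$. Then $U_{1}(x,z)=x\vee z=z$, so the monotonicity of $U_{1}(x,\cdot)$ forces
\[
U^{*}(x,y)=U_{1}(x,y)\leq U_{1}(x,z)=z,
\]
i.e., $U^{*}(x,y)\nparallel z$. Conditions~(2)--(4) are obtained by analogous triples that force the asserted comparability through monotonicity or, when $U^{*}(x,y)$ needs to be ruled out of $[0,e]$, through associativity against a carefully chosen $z\in I_{e,a}\cup I_{a}^{e}$. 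The main obstacle is the bookkeeping of the associativity case analysis: Proposition~\ref{pro2.1} demands four types of identities across all combinations drawn from a six-element partition, and conditions~(1)--(4) are tailored so that each nontrivial combination is closed off by exactly one of them; once the partition is chosen and the role of each condition identified, no single verification is difficult, but the total amount of casework is substantial.
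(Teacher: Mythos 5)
Your proposal follows essentially the same route as the paper's proof: the same six-block partition $[0,e]\cup(e,a]\cup I_{e}^{a}\cup(a,1]\cup I_{a}^{e}\cup I_{e,a}$, the same appeal to Proposition~\ref{pro2.1} for associativity, and the same necessity arguments (condition (1) via monotonicity against $U_{1}(x,z)=x\vee z=z$, conditions (2)--(4) via associativity failures with a chosen $z\in I_{e,a}\cup I_{a}^{e}$). The only difference is one of completeness rather than of method: the paper executes the full case analysis explicitly, whereas you describe it and verify only representative cases, so to be a finished proof yours would still need the remaining (routine but lengthy) verifications written out.
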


\begin{proof}
Necessity. Suppose that  $U_{1}(x,y)$ is a uninorm with the neutral element $e$.  Next we need to show  that $U^{*}$ satisfies the conditions (1), (2), (3) and (4).

$(1)$.  For   $z\in I_{e,a}$, if  $(x,y)\in ((I_{e}^{a}\cup (e,a])\cap \{l\in L\ | \ l \nparallel z\})^{2}$,  then $U^{*}(x,y)\nparallel z$.

Assume that for  $z\in I_{e,a}$,  there exists $(x,y)\in ((I_{e}^{a}\cup (e,a])\cap \{l\in L\ | \ l \nparallel z\})^{2}$ such that  $U^{*}(x,y)\parallel z$. Then $U_{1}(x,z)=z$ and $U_{1}(x,y)=U^{*}(x,y)$. Since $U^{*}(x,y)\parallel z$, this  contradicts the increasingness property of $U_{1}(x,y)$. Thus, for   $z\in I_{e,a}$, if  $(x,y)\in ((I_{e}^{a}\cup (e,a])\cap \{l\in L\ | \ l \nparallel z\})^{2}$, then $U^{*}(x,y)\nparallel z$.

$(2)$.  For  $z\in I_{e,a}$, if   $(x,y)\in[0,a]\times((I_{e}^{a}\cup (e,a])\cap\{l\in L\ |\ l \parallel z\})\cup((I_{e}^{a}\cup (e,a])\cap\{l\in L\ |\ l \parallel z\})\times[0,a]$, then $U^{*}(x,y)\parallel z$.

Now we give the proof of that for  $z\in I_{e,a}$, if   $(x,y)\in[0,a]\times((I_{e}^{a}\cup(e,a])\cap\{l\in L\ |\ l \parallel z\})$, then $U^{*}(x,y)\parallel z$, and the other case is obvious by the commutativity  of $U^{*}$. Assume that for $z\in I_{e,a}$, there exists $(x,y)\in[0,a]\times((I_{e}^{a}\cup (e,a])\cap\{l\in L\ |\ l \parallel z\})$ such that  $U^{*}(x,y)\nparallel z$. Then $U_{1}(x,U_{1}(y,z))=U_{1}(x,z\vee a)=z\vee a$ and $U_{1}(U_{1}(x,y),z)=U_{1}(U^{*}(x,y),z)=z$. Since $z\vee a\neq z$, this   contradicts  the associativity  of $U_{1}(x,y)$. Thus, for  $z\in I_{e,a}$, if  $(x,y)\in[0,a]\times((I_{e}^{a}\cup(e,a])\cap\{l\in L\ |\ l \parallel z\})\cup((I_{e}^{a}\cup(e,a])\cap\{l\in L\ |\ l \parallel z\})\times[0,a]$, then $U^{*}(x,y)\parallel z$.

$(3)$.  For  $z\in I_{a}^{e}$, if $(x,y)\in((I_{e}^{a}\cup (e,a])\cap\{l\in L\ |\ l \nparallel z\})^{2}$, then $U^{*}(x,y)\nparallel z  $.

Assume that for $z\in I_{a}^{e}$, there exists $(x,y)\in((I_{e}^{a}\cup (e,a])\cap\{l\in L\ |\ l \nparallel z\})^{2} $ such that $U^{*}(x,y)\parallel z  $. Then $U_{1}(x,U_{1}(y,z))=U_{1}(x,z)=z$ and $U_{1}(U_{1}(x,y),z)=U_{1}(U^{*}(x,y),z)=z\vee a$. Since $z\vee a\neq z$, this   contradicts the associativity   of $U_{1}$. Thus, for   $z\in I_{a}^{e}$, if  $(x,y)\in((I_{e}^{a}\cup (e,a])\cap\{l\in L\ |\ l \nparallel z\})^{2}$, then $U^{*}(x,y)\nparallel z  $.

$(4)$. For $z\in I_{a}^{e}$, if $(x,y)\in[0,a]\times((I_{e}^{a}\cup(e,a])\cap\{l\in L\ |\ l \parallel a\})\cup ((I_{e}^{a}\cup(e,a])\cap\{l\in L\ |\ l \parallel z\})\times[0,a]$, then $U^{*}(x,y)\parallel z  $.

Now we prove that for $z\in I_{a}^{e}$, if $(x,y)\in[0,a]\times((I_{e}^{a}\cup(e,a])\cap\{l\in L\ |\ l \parallel z\})$, then $U^{*}(x,y)\parallel z  $, and the other case is obvious   by the commutativity  of $U^{*}$. Assume that for $z\in I_{a}^{e}$, there exists $(x,y)\in[0,a]\times((I_{e}^{a}\cup(e,a])\cap\{l\in L\ |\ l \parallel z\})$ such that   $U^{*}(x,y)\nparallel z  $. Then $U_{1}(x,U_{1}(y,z))=U_{1}(x,z\vee a)=z\vee a$ and $U_{1}(U_{1}(x,y),z)=U_{1}(U^{*}(x,y),z)=z$. Since $z\vee a\neq z$, this  contradicts the associativity  of $U_{1}$. Thus, for $z\in I_{a}^{e}$, if $(x,y)\in[0,a]\times((I_{e}^{a}\cup(e,a])\cap\{l\in L\ |\ l \parallel a\})\cup ((I_{e}^{a}\cup(e,a])\cap\{l\in L\ |\ l \parallel z\})\times[0,a]$, then $U^{*}(x,y)\parallel z  $.

Sufficiency.  First, we can see that $U_{1}$ is commutative and  $e$  is the neutral element of $U_{1}$.  Hence, we only need to prove the increasingness and the associativity of $U_{1}$.

I. Increasingness: Next, we prove that if $x\leq y$, then $U_{1}(x,z)\leq U_{1}(y,z)$ for all $z\in L$. It is easy to verify that $U_{1}(x,z)\leq U_{1}(y,z)$ if both $x$ and $y$ belong to one of the  intervals $ [0,e], I_{e}^{a}, (e,a], I_{a}^{e}, I_{e,a}$ or $(a,1]$ for all $z\in L$. The residual proof can be split into all possible cases.

1. $x\in [0,e]$

\ \ \ 1.1. $y\in I_{e}^{a}\cup (e,a]$

\ \ \ \ \ \ 1.1.1. $z\in [0,e]\cup I_{e}^{a}\cup (e,a]$

\ \ \ \ \ \ \ \ \ \ \ \ $U_{1}(x,z)=U^{*}(x,z)\leq U^{*}(y,z)=U_{1}(y,z)$

\ \ \ \ \ \ 1.1.2. $z\in I_{a}^{e}\cup I_{e,a}\cup (a,1]$

\ \ \ \ \ \ \ \ \ \ \ \ $U_{1}(x,z)=z\leq y\vee z =U_{1}(y,z)$

\ \ \ 1.2. $y\in I_{a}^{e}\cup I_{e,a}\cup (a,1]$

\ \ \ \ \ \ 1.2.1. $z\in [0,e]$

\ \ \ \ \ \ \ \ \ \ \ \ $U_{1}(x,z)=U^{*}(x,z)\leq x<y =U_{1}(y,z)$

\ \ \ \ \ \ 1.2.2. $z\in  I_{e}^{a}\cup (e,a]$

\ \ \ \ \ \ \ \ \ \ \ \ $U_{1}(x,z)=U^{*}(x,z)\leq z<y\vee z =U_{1}(y,z)$

\ \ \ \ \ \ 1.2.3. $z\in I_{a}^{e}\cup I_{e,a}\cup (a,1]$

\ \ \ \ \ \ \ \ \ \ \ \ $U_{1}(x,z)=z\leq y\vee z =U_{1}(y,z)$

2. $x\in I_{e}^{a}$

\ \ \ 2.1. $y\in (e,a]$

\ \ \ \ \ \ 2.1.1. $z\in [0,e]\cup I_{e}^{a}\cup (e,a]$

\ \ \ \ \ \ \ \ \ \ \ \ $U_{1}(x,z)=U^{*}(x,z)\leq U^{*}(y,z)=U_{1}(y,z)$

\ \ \ \ \ \ 2.1.2. $z\in I_{a}^{e}\cup I_{e,a}\cup (a,1]$

\ \ \ \ \ \ \ \ \ \ \ \ $U_{1}(x,z)=x\vee z\leq y\vee z =U_{1}(y,z)$

\ \ \ 2.2. $y\in I_{a}^{e} $

\ \ \ \ \ \ 2.2.1. $z\in [0,e]$

\ \ \ \ \ \ \ \ \ \ \ \ $U_{1}(x,z)=U^{*}(x,z)\leq x<y =U_{1}(y,z)$

\ \ \ \ \ \ 2.2.2. $z\in I_{e}^{a}\cup (e,a]$

\ \ \ \ \ \ \ \ \ \ \ \ If $z\parallel y$, then $U_{1}(x,z)=U^{*}(x,z)\leq a<y\vee a= y\vee z =U_{1}(y,z)$.

\ \ \ \ \ \ \ \ \ \ \ \ If $z\nparallel y$, then $U_{1}(x,z)=U^{*}(x,z) < y =y\vee z =U_{1}(y,z)$.

\ \ \ \ \ \ 2.2.3. $z\in I_{a}^{e}\cup I_{e,a} \cup(a,1]$

\ \ \ \ \ \ \ \ \ \ \ \ $U_{1}(x,z)=x\vee z\leq y\vee z =U_{1}(y,z)$

\ \ \ 2.3. $y\in I_{e,a}$

\ \ \ \ \ \ 2.3.1. $z\in [0,e]$

\ \ \ \ \ \ \ \ \ \ \ \ $U_{1}(x,z)=U^{*}(x,z)\leq x<y =U_{1}(y,z)$

\ \ \ \ \ \ 2.3.2. $z\in  I_{e}^{a}$

\ \ \ \ \ \ \ \ \ \ \ \ If $z\parallel y$, then $U_{1}(x,z)=U^{*}(x,z)\leq a<y\vee a= y\vee z =U_{1}(y,z)$.

\ \ \ \ \ \ \ \ \ \ \ \ If $z\nparallel y$, then $U_{1}(x,z)=U^{*}(x,z) < y =y\vee z =U_{1}(y,z)$.

\ \ \ \ \ \ 2.3.3. $z\in   (e,a]$

\ \ \ \ \ \ \ \ \ \ \ \ $U_{1}(x,z)=U^{*}(x,z)\leq a<y\vee a=y\vee z =U_{1}(y,z)$

\ \ \ \ \ \ 2.3.4. $z\in I_{a}^{e}\cup I_{e,a}\cup (a,1]$

\ \ \ \ \ \ \ \ \ \ \ \ $U_{1}(x,z)=x\vee z\leq y\vee z =U_{1}(y,z)$

\ \ \ 2.4. $y\in  (a,1]$

\ \ \ \ \ \ 2.4.1. $z\in [0,e]\cup I_{e}^{a}\cup (e,a]$

\ \ \ \ \ \ \ \ \ \ \ \ $U_{1}(x,z)=U^{*}(x,z)\leq a<y =U_{1}(y,z)$

\ \ \ \ \ \ 2.4.2. $z\in I_{a}^{e}\cup I_{e,a} \cup(a,1]$

\ \ \ \ \ \ \ \ \ \ \ \ $U_{1}(x,z)=x\vee z\leq y\vee z =U_{1}(y,z)$

3. $x\in (e,a]$

\ \ \ 3.1. $y\in I_{a}^{e}$

\ \ \ \ \ \ 3.1.1. $z\in [0,e]$

\ \ \ \ \ \ \ \ \ \ \ \ $U_{1}(x,z)=U^{*}(x,z)\leq x<y =U_{1}(y,z)$

\ \ \ \ \ \ 3.1.2. $z\in I_{e}^{a}\cup (e,a]$

\ \ \ \ \ \ \ \ \ \ \ \ If $z\parallel y$, then $U_{1}(x,z)=U^{*}(x,z)\leq a<y\vee a= y\vee z =U_{1}(y,z)$.

\ \ \ \ \ \ \ \ \ \ \ \ If $z\nparallel y$, then $U_{1}(x,z)=U^{*}(x,z) < y =y\vee z =U_{1}(y,z)$.

\ \ \ \ \ \ 3.1.3. $z\in I_{a}^{e}\cup I_{e,a}\cup (a,1]$

\ \ \ \ \ \ \ \ \ \ \ \ $U_{1}(x,z)=x\vee z\leq y\vee z =U_{1}(y,z)$

\ \ \ 3.2. $y\in (a,1]$

\ \ \ \ \ \ 3.2.1. $z\in [0,e]\cup I_{e}^{a}\cup (e,a]$

\ \ \ \ \ \ \ \ \ \ \ \ $U_{1}(x,z)=U^{*}(x,z)\leq a<y =U_{1}(y,z)$

\ \ \ \ \ \ 3.2.2. $z\in I_{a}^{e}\cup I_{e,a}\cup (a,1]$

\ \ \ \ \ \ \ \ \ \ \ \ $U_{1}(x,z)=x\vee z\leq y\vee z =U_{1}(y,z)$

4. $x\in I_{a}^{e}, y\in (a,1], z\in L$

\ \ \ \ \ \ \ \ \ \ \ \ $U_{1}(x,z)=x\vee z\leq y\vee z =U_{1}(y,z)$

5. $x\in I_{e,a}, y\in I_{a}^{e}\cup(a,1]$

\ \ \ 5.1. $z\in [0,e]$

\ \ \ \ \ \ \ \ \ \ \ \ $U_{1}(x,z)=x\leq y =U_{1}(y,z)$

\ \ \ 5.2. $z\in I_{e}^{a}\cup(e,a]\cup I_{a}^{e}\cup I_{e,a}\cup(a,1]$

\ \ \ \ \ \ \ \ \ \ \ \ $U_{1}(x,z)=x\vee z\leq y\vee z =U_{1}(y,z)$

II. Associativity: It can be shown that $U_{1}(x,U_{1}(y,z))$ $=U_{1}(U_{1}(x,y),z)$  for all $x,y,z\in L$. By Proposition \ref{pro2.1}, we just  verify the following cases.

1. If $x,y,z\in [0,e]\cup I_{e}^{a}\cup  (e,a]$, then $U_{1}(x, U_{1}(y,z))=U_{1}(U_{1}(x,y),z)=U_{1}(y, U_{1}(x,z))$ for   $U^{*}$ is associative.

2. If $x,y,z\in I_{a}^{e}\cup I_{e,a}\cup (a,1] $, then $U_{1}(x, U_{1}(y,z))=U_{1}(x,y\vee z)=x\vee y\vee z =U_{1}(x\vee y,z)=U_{1}(U_{1}(x,y),z) $ and $U_{1}(y, U_{1}(x,z))=U_{1}(y,x\vee z)=x\vee y\vee z$. Thus $U_{1}(x, U_{1}(y,z))=U_{1}(U_{1}(x,y),z)=U_{1}(y, U_{1}(x,z))$.

3. If $x,y\in [0,e]$ and $z\in I_{a}^{e}\cup I_{e,a}\cup (a,1] $, then $U_{1}(x, U_{1}(y,z))=U_{1}(x,z)=z =U_{1}(U^{*}(x,y),z)=U_{1}(U_{1}(x,y),z)$.

4. If $x,y\in I_{e}^{a}, z\in I_{a}^{e}\cup I_{e,a}, x\nparallel z$ and $y\nparallel z$, then $U_{1}(x, U_{1}(y,z))=U_{1}(x,z)=z =U_{1}(U^{*}(x,y),z)=U_{1}(U_{1}(x,y),z)$.

\ If $x,y\in I_{e}^{a}, z\in I_{a}^{e}\cup I_{e,a} $ and at least one of $x,y$ is incomparable with $z$, then $U_{1}(x, U_{1}(y,z))=U_{1}(x,y\vee z)=x\vee y\vee z =z\vee a=U^{*}(x,y)\vee z =U_{1}(U^{*}(x,y),z)=U_{1}(U_{1}(x,y),z) $.

5. If $x,y\in I_{e}^{a}$ and $z\in (a,1]$, then $U_{1}(x, U_{1}(y,z))=U_{1}(x,z)=z =U_{1}(U^{*}(x,y),z)=U_{1}(U_{1}(x,y),z) $.

6. If $x,y\in (e,a], z\in I_{a}^{e} , x\nparallel z$ and $y\nparallel z$, then $U_{1}(x, U_{1}(y,z))=U_{1}(x,z)=z =U_{1}(U^{*}(x,y),z)=U_{1}(U_{1}(x,y),z)$.

\ If  $x,y\in (e,a], z\in I_{a}^{e}$ and at least one of $x$ and $y$ is incomparable with $z$,  then $U_{1}(x, U_{1}(y,z))=U_{1}(x,y\vee z)=x\vee y\vee z =z\vee a=U^{*}(x,y)\vee z =U_{1}(U^{*}(x,y),z)=U_{1}(U_{1}(x,y),z)$.

7. If $x,y\in (e,a]$ and $z\in I_{e,a}$, then $U_{1}(x, U_{1}(y,z))$ $=U_{1}(x,y\vee z)=x\vee y\vee z =z\vee a=U^{*}(x,y)\vee z =U_{1}(U^{*}(x,y),z)=U_{1}(U_{1}(x,y),z) $.

8. If $x,y\in (e,a]$ and $z\in (a,1]$, then $U_{1}(x, U_{1}(y,z))$ $ =U_{1}(x,z)=z =U_{1}(U^{*}(x,y),z)$ $=U_{1}(U_{1}(x,y),z) $.

9. If $x\in [0,e]$ and $y,z\in I_{a}^{e}\cup I_{e,a}\cup (a,1] $, then $U_{1}(x, U_{1}(y,z))=U_{1}(x,y \vee z)=y\vee z =U_{1}(y,z)=U_{1}(U_{1}(x,y),z) $ and $U_{1}(y, U_{1}(x,z))=U_{1}(y,z)=y\vee z$. Thus $U_{1}(x, U_{1}(y,z))=U_{1}(U_{1}(x,y),z)=U_{1}(y, U_{1}(x,z))$.

10. If $x\in I_{e}^{a}\cup (e,a]$, $y,z\in I_{a}^{e}\cup I_{e,a}\cup (a,1] $, then $U_{1}(x, U_{1}(y,z))=U_{1}(x,y\vee z)=x\vee y\vee z =U_{1}(x\vee y,z)=U_{1}(U_{1}(x,y),z) $ and $U_{1}(y, U_{1}(x,z))=U_{1}(y,x\vee z)=x\vee y\vee z$. Thus $U_{1}(x, U_{1}(y,z))=U_{1}(U_{1}(x,y),z)=U_{1}(y, U_{1}(x,z))$.

11. If $x\in [0,e], y\in I_{e}^{a}, z\in I_{a}^{e}\cup I_{e,a}$ and $y\nparallel z$, then $U_{1}(x, U_{1}(y,z))=U_{1}(x,z)=z =U_{1}(U^{*}(x,y),z)= U_{1}(U_{1}(x,y),z)  $ and $U_{1}(y, U_{1}(x,z))$ $=U_{1}(y,z)=z$. Thus $U_{1}(x, U_{1}(y,z))=U_{1}(U_{1}(x,y),$ $z)=U_{1}(y, U_{1}(x,z))$.

\ If $x\in [0,e], y\in I_{e}^{a}, z\in I_{a}^{e}\cup I_{e,a}$ and $y\parallel z$, then $U_{1}(x, U_{1}(y,z))=U_{1}(x,y\vee z)=x\vee y\vee z =z\vee a=U^{*}(x,y)\vee z =U_{1}(U^{*}(x,y),z)= U_{1}(U_{1}(x,y),z) $ and $U_{1}(y, U_{1}(x,z))=U_{1}(y,z)=y\vee z =z\vee a$. Thus $U_{1}(x, U_{1}(y,z))=U_{1}(U_{1}(x,y),z)=U_{1}(y, U_{1}(x,z))$.

12. If $x\in [0,e],y\in I_{e}^{a}$ and $z\in (a,1]$, then $U_{1}(x, U_{1}(y,z))=U_{1}(x,z)=z =U_{1}(U^{*}(x,y),z)$ $= U_{1}(U_{1}(x,y),z) $ and $U_{1}(y, U_{1}(x,z))=U_{1}(y,z)=z$. Thus $U_{1}(x, U_{1}(y,z))$ $=U_{1}(U_{1}(x,y),z)=U_{1}(y, U_{1}$ $(x,z))$.

13. If $x\in [0,e], y\in (e,a], z\in I_{a}^{e}$ and $y\nparallel z$, then $U_{1}(x, U_{1}(y,z))=U_{1}(x,z)=z =U_{1}(U^{*}(x,y),z)= U_{1}(U_{1}(x,y),z)  $ and $U_{1}(y, U_{1}(x,z))=U_{1}(y,z)=z$. Thus $U_{1}(x, U_{1}(y,z))$ $=U_{1}(U_{1}(x,y),z)=U_{1}(y, U_{1}$ $(x,z))$.

\ If $x\in [0,e], y\in (e,a], z\in I_{a}^{e}$ and $y\parallel z$, then $U_{1}(x, U_{1}(y,z))=U_{1}(x,y\vee z)=x\vee y\vee z =z\vee a=U^{*}(x,y)\vee z =U_{1}(U^{*}(x,y),z)= U_{1}(U_{1}(x,y),z) $ and $U_{1}(y, U_{1}(x,z))=U_{1}(y,z)=y\vee z =z\vee a$. Thus $U_{1}(x, U_{1}(y,z))=U_{1}(U_{1}(x,y),z)=U_{1}(y, U_{1}(x,z))$.

14. If $x\in [0,e],y\in (e,a]$ and $z\in I_{e,a}$, then $U_{1}(x, U_{1}(y,z))=U_{1}(x,y\vee z)=x\vee y\vee z =z\vee a=U^{*}(x,y)\vee z =U_{1}(U^{*}(x,y),z)= U_{1}(U_{1}(x,y),z) $ and $U_{1}(y, U_{1}(x,z))=U_{1}(y,z)=y\vee z =z\vee a$. Thus $U_{1}(x, U_{1}(y,z))=U_{1}(U_{1}(x,y),z)=U_{1}(y, U_{1}(x,z))$.

15. If $x\in [0,e],y\in (e,a]$ and $z\in (a,1]$, then $U_{1}(x, U_{1}(y,z))=U_{1}(x,z)=z =U_{1}(U^{*}(x,y),z)=U_{1}(U_{1}(x,y),z) $ and $U_{1}(y, U_{1}(x,z))=U_{1}(y,z)=z$. Thus $U_{1}(x, U_{1}(y,z))$ $=U_{1}(U_{1}(x,y),z)=U_{1}(y, U_{1}$ $(x,z))$.

16. If $x\in I_{e}^{a}, y\in (e,a], z\in I_{a}^{e} , x\nparallel z$ and $y\nparallel z$, then $U_{1}(x, U_{1}(y,z))=U_{1}(x,z)=z =U_{1}(U^{*}(x,y),$ $z)= U_{1}(U_{1}(x,y),z) $ and $U_{1}(y, U_{1}(x,z)) =U_{1}(y,z)=z$. Thus $U_{1}(x, U_{1}(y,z))=U_{1}(U_{1}(x,y),z)=U_{1}(y, U_{1}(x,z))$.

\ If $x\in I_{e}^{a}, y\in (e,a],  z\in I_{a}^{e} $ and at least one of $x,y$ is incomparable with $z$, then $U_{1}(x, U_{1}(y,z))=U_{1}(x,y\vee z)=x\vee y\vee z =z\vee a=U^{*}(x,y)\vee z =U_{1}(U^{*}(x,y),z)=U_{1}(U_{1}(x,y),z) $ and $U_{1}(y, U_{1}(x,z))$ $=U_{1}(y,x\vee z)=y\vee x\vee z =z\vee a$. Thus $U_{1}(x, U_{1}(y,z))=U_{1}(U_{1}(x,y),z)=U_{1}(y, U_{1}(x,z))$.

17. If $x\in I_{e}^{a},y\in (e,a]$ and $z\in I_{e,a}$, then $U_{1}(x, U_{1}(y,z))=U_{1}(x,y\vee z)=x\vee y\vee z =z\vee a=U^{*}(x,y)\vee z =U_{1}(U^{*}(x,y),z)= U_{1}(U_{1}(x,y),z) $ and $U_{1}(y, U_{1}(x,z))=U_{1}(y,x\vee z)=x\vee y\vee z =z\vee a$. Thus $U_{1}(x, U_{1}(y,z))=U_{1}(U_{1}(x,y),z)=U_{1}(y, U_{1}(x,z))$.

18. If $x\in I_{e}^{a},y\in (e,a]$ and $z\in (a,1]$, then $U_{1}(x, U_{1}(y,z))=U_{1}(x,z)=z =U_{1}(U^{*}(x,y),z)$ $=U_{1}(U_{1}(x,y),z) $ and $U_{1}(y, U_{1}(x,z))=U_{1}(y,z)=z$. Thus $U_{1}(x, U_{1}(y,z))$ $=U_{1}(U_{1}(x,y),z)$ $=U_{1}(y, U_{1}$ $(x,z))$.
\end{proof}

\begin{remark}
Theorem 3.1  seem to be  restrained for there are some  conditions on the given uninorm $U^{*}$.  However, these conditions are necessary for our construction methods.
  On one hand, these additional conditions are necessary and sufficient;  on the other hand,  in case of $e=a$ or $e=0$, these conditions naturally hold and then  Theorem 3.1 is  the existed result in the literature as follows. These show the rationality of these conditions and our uninorms in some degree.
\end{remark}

\begin{remark}\label{re32}
 In Theorem \ref{th31}, if taking $e=a$, then  $[0,a]=[0,e]$, $I_{e,a}=I_{e}$, $I_{e}^{a}\cup I_{a}^{e}\cup (e,a]=\emptyset$ and $U^{*}$ is a $t$-norm on $[0,a]$. Moreover, based on the above case, the conditions $(1)$, $(2)$, $(3)$  and $(4)$ in Theorem \ref{th31} naturally hold.

By the above fact, if taking $e=a$ in Theorem \ref{th31}, then  we retrieve the uninorm $U_{t_{1}}: L^{2}\rightarrow L$ constructed by \c{C}ayl{\i}, Kara\c{c}al, and Mesiar (\cite{GD16}, Theorem 1) as follow.

$U_{t_{1}}(x,y)=\begin{cases}
T_{e}(x, y) &\mbox{if } (x,y)\in [0,e]^{2},\\
x &\mbox{if } (x,y)\in I_{e}\times[0,e],\\
y &\mbox{if } (x,y)\in [0,e]\times I_{e},\\
x\vee y &\mbox{}otherwise.\\
\end{cases}$
\end{remark}


\begin{lemma}\label{Le32}
In Theorem \ref{th31},  if  $e=0$,  then  the condition $(4)$  holds.
\end{lemma}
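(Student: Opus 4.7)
The plan is to verify that when $e=0$ the hypotheses of condition~$(4)$ force the conclusion purely from the $t$-conorm structure together with the fact that the witnessing element is incomparable with $a$. First I would unpack how the relevant index sets degenerate: since every element of $L$ is comparable with $0$, we have $I_{0}=\emptyset$, so $I_{e}^{a}=I_{0}^{a}=\emptyset$ and $I_{e,a}=I_{0,a}=\emptyset$, while $I_{a}^{e}=I_{a}^{0}=I_{a}$. Moreover, $(e,a]=(0,a]$, and $U^{*}$, being a uninorm on $[0,a]$ with neutral element $0$, is in fact a $t$-conorm on $[0,a]$.

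With these reductions, the hypothesis of $(4)$ becomes: for $z\in I_{a}$, if $(x,y)\in[0,a]\times((0,a]\cap\{l\mid l\parallel z\})\cup((0,a]\cap\{l\mid l\parallel z\})\times[0,a]$, then $U^{*}(x,y)\parallel z$. By the commutativity of $U^{*}$, I would only treat the case $x\in[0,a]$, $y\in(0,a]$ with $y\parallel z$ and $z\parallel a$. Then, using that $U^{*}$ is a $t$-conorm on $[0,a]$, I would note the two-sided bound $y=U^{*}(0,y)\leq U^{*}(x,y)\leq a$.

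The concluding step is a short contradiction. Suppose $U^{*}(x,y)\nparallel z$. If $U^{*}(x,y)\leq z$, then $y\leq U^{*}(x,y)\leq z$, contradicting $y\parallel z$; if $U^{*}(x,y)\geq z$, then $z\leq U^{*}(x,y)\leq a$, contradicting $z\parallel a$. Hence $U^{*}(x,y)\parallel z$ and $(4)$ holds.

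There is no real obstacle here: the only care needed is in correctly collapsing the sets $I_{e}^{a}$, $I_{a}^{e}$, $I_{e,a}$ and in remembering that a uninorm on $[0,a]$ with neutral element $0$ is a $t$-conorm, which yields the two crucial inequalities $y\leq U^{*}(x,y)\leq a$ that drive the contradictions.
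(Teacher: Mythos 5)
Your proof is correct and follows essentially the same route as the paper: collapse $I_{e}^{a}$ and $I_{e,a}$ to $\emptyset$ and $I_{a}^{e}$ to $I_{a}$, reduce to one case by commutativity, and use the $t$-conorm bounds $y\leq U^{*}(x,y)\leq a$ to contradict $y\parallel z$. If anything, you are slightly more careful than the paper, which asserts that $U^{*}(x,y)\nparallel z$ means $U^{*}(x,y)<z$ without explicitly noting that $U^{*}(x,y)\geq z$ is excluded by $U^{*}(x,y)\leq a$ and $z\parallel a$ --- the branch you spell out.
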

\begin{proof}
 If  $e=0$,  then we can  rewrite the condition $(4)$ as follow: for $z\in I_{a}$, if  $(x,y)\in[0,a]\times( (0,a] \cap\{l\in L\ |\ l \parallel z\})\cup((0,a] \cap\{l\in L\ |\ l \parallel z\})\times[0,a]$, then $U^{*}(x, y)\parallel z$.
Next we just prove that  for $z\in I_{a} $, if  $(x,y)\in[0,a]\times( (0,a] \cap\{l\in L\ |\ l \parallel z\})$, then $U^{*}(x, y)\parallel z  $. The other case  in the above is obvious by the commutativity  of $U^{*}$.  Obviously, $U^{*}$ is a $t$-conorm on $[0,a]$. Assume that  for $z\in I_{a} $, there exists $(x,y)\in[0,a]\times( (0,a] \cap\{l\in L\ |\ l \parallel z\})$  such that   $U^{*}(x, y)\nparallel z  $, that is, $U^{*}(x, y)<  z$. For $(x,y)\in[0,a]\times( (0,a] \cap\{l\in L\ |\ l \parallel z\})$,
 we can obtain that $U^{*}(x, y)\in [x\vee y,a]$.
If $U^{*}(x, y)< z$, then $ y\leq  x\vee y\leq U^{*}(x, y)< z$.  This contradicts with the fact  $y\in \{l\in L\ |\ l \parallel z\}$.
Hence,  $U^{*}(x, y)\parallel z$.

\end{proof}

 In Theorem \ref{th31},  if taking $e=0$, then  $U^{*}$ is a $t$-conorm on $[0,a]$ and $I_{e,a}\cup I_{e}^{a}=\emptyset$. Thus, $I_{e}^{a}\cup(e,a]=(e,a]=(0,a]$  and  the  conditions $(1)$, $(2)$ and  $(4)$  hold  by Lemma \ref{Le32}.  In this case, we can obtain the following proposition.
\begin{proposition}\label{co31}
Let $S$ be a $t$-conorm on $[0,a]$ for $a\in L\setminus\{0,1\}$. Then the function $S_{1}(x,y):L^{2}\rightarrow L$ defined by

$S_{1}(x,y)=\begin{cases}
S(x, y) &\mbox{if } (x,y)\in [0,a]^{2},\\
x\vee y &\mbox{}otherwise,\\
\end{cases}$\\
is a $t$-conorm  if and only if
for   $z\in I_{a}$, if  $(x,y)\in((0,a]\cap\{l\in L\ |\ l \nparallel z\})^{2}$, then $S(x, y)\nparallel z  $.
\end{proposition}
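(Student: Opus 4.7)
The plan is to obtain this proposition as the specialization of Theorem \ref{th31} at $e=0$, noting that a uninorm with neutral element $0$ is automatically a $t$-conorm. So my first step is to verify that when $e=0$ the piecewise definition of $U_1$ collapses onto $S_1$. Since $[0,e]=\{0\}$ and the only elements of $L$ incomparable with $e=0$ would have to be incomparable with the bottom (impossible), we get $I_{e,a}=\emptyset$ and $I_e^a=\emptyset$. Hence the second and third branches of $U_1$ are vacuous, and the first and fourth branches are exactly the first and second branches of $S_1$.

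Next I would examine the four conditions (1)--(4) of Theorem \ref{th31} one at a time under the substitution $e=0$. Conditions (1) and (2) quantify over $z\in I_{e,a}=\emptyset$, so they are vacuously true. Condition (4) is precisely the content of Lemma \ref{Le32}, which the paper has just established. That leaves only condition (3). Since $I_a^e=I_a^0$; and every element of $L$ is comparable with $0$, we get $I_a^e=I_a$. Also $I_e^a\cup(e,a]=\emptyset\cup(0,a]=(0,a]$. Therefore condition (3) reads: for every $z\in I_a$, if $(x,y)\in\bigl((0,a]\cap\{l\in L\mid l\nparallel z\}\bigr)^2$, then $S(x,y)\nparallel z$. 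This is exactly the condition stated in the proposition.

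Finally, I would tie the two directions together. Applying Theorem \ref{th31} with $e=0$, the function $U_1$ (which equals $S_1$) is a uninorm with neutral element $0$ on $L$ if and only if (3) holds, and a uninorm with neutral element $0$ is by Definition \ref{} precisely a $t$-conorm on $L$. This gives both directions of the ``if and only if'' simultaneously, completing the argument.

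The only place where something could go wrong is the identification of the sets $I_{e,a}$, $I_e^a$, $I_a^e$ under $e=0$; this is routine but worth writing out once because the remainder of the proof rests on it. Beyond this bookkeeping the result is an immediate corollary, so no genuine obstacle arises.
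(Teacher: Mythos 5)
Your proposal is correct and follows essentially the same route as the paper: the paper likewise obtains Proposition \ref{co31} by specializing Theorem \ref{th31} to $e=0$, observing that $I_{e,a}=I_{e}^{a}=\emptyset$ makes conditions $(1)$ and $(2)$ vacuous, invoking Lemma \ref{Le32} for condition $(4)$, and reading off condition $(3)$ as the stated constraint on $S$. Your identification of the sets $I_{e,a}$, $I_{e}^{a}$, $I_{a}^{e}$ under $e=0$ and the collapse of $U_{1}$ onto $S_{1}$ match the paper's bookkeeping exactly.
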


\begin{remark}\label{}
In Proposition \ref{co31}, we  give a sufficient and necessary constraint condition under which $S_{1}$ is a $t$-conorm. Obviously, this condition differs from that  in Theorem \ref{th250}.
 More precisely, our condition is based on  the viewpoint of $t$-conorms; the condition in Theorem \ref{th250} is based on the viewpoint of $L$.


\end{remark}

\begin{example}\label{ex31}
Given a bounded lattice $L_{1}$ drawn in Fig.1. and a uninorm $U^{*}$ on $[0,a]$ shown in Table \ref{Tab:056}. It is clear that $U^{*}$ satisfies the conditions in Theorem \ref{th31} on $L_{1}$. Based on Theorem \ref{th31},  a uninorm $U_{1}$ on $L_{1}$,   shown in Table \ref{Tab:057}, can be obtained.
\end{example}

\begin{minipage}{11pc}
\setlength{\unitlength}{0.75pt}\begin{picture}(600,175)
\put(270,20){\circle{2}}\put(267,12){\makebox(0,0)[l]{\footnotesize$0$}}
\put(270,40){\circle{2}}\put(275,40){\makebox(0,0)[l]{\footnotesize$b$}}
\put(270,60){\circle{2}}\put(274,63){\makebox(0,0)[l]{\footnotesize$e$}}
\put(270,80){\circle{2}}\put(260,80){\makebox(0,0)[l]{\footnotesize$c$}}
\put(270,100){\circle{2}}\put(260,103){\makebox(0,0)[l]{\footnotesize$d$}}
\put(270,120){\circle{2}}\put(275,120){\makebox(0,0)[l]{\footnotesize$a$}}
\put(270,140){\circle{2}}\put(275,140){\makebox(0,0)[l]{\footnotesize$f$}}
\put(270,160){\circle{2}}\put(268,170){\makebox(0,0)[l]{\footnotesize$1$}}
\put(232,80){\circle{2}}\put(222,80){\makebox(0,0)[l]{\footnotesize$l$}}
\put(250,60){\circle{2}}\put(236,60){\makebox(0,0)[l]{\footnotesize$m$}}
\put(250,80){\circle{2}}\put(236,80){\makebox(0,0)[l]{\footnotesize$n$}}
\put(290,120){\circle{2}}\put(295,120){\makebox(0,0)[l]{\footnotesize$s$}}
\put(290,60){\circle{2}}\put(295,60){\makebox(0,0)[l]{\footnotesize$k$}}

\put(270,21){\line(0,1){18}}
\put(270,41){\line(0,1){18}}
\put(270,41){\line(-1,1){19}}
\put(250,61){\line(0,1){18}}
\put(270,61){\line(0,1){18}}
\put(270,81){\line(0,1){18}}
\put(270,81){\line(1,2){19}}
\put(270,101){\line(0,1){18}}
\put(270,121){\line(0,1){18}}
\put(290,121){\line(-1,1){19}}
\put(270,141){\line(0,1){18}}
\put(270,41){\line(1,1){19}}
\put(250,61){\line(-1,1){19}}
\put(250,81){\line(1,1){19}}
\put(290,61){\line(-1,2){19}}
\put(231,81){\line(2,3){39}}

\put(200,-10){\emph{Fig.1. The lattice $L_{1}$}}
\end{picture}
\end{minipage}

\begin{table}[htbp]
\centering
\caption{$U^{*}$ on $[0,a]$.}
\label{Tab:056}\

\begin{tabular}{c c c c c c c c c c}
\hline
  $U^{*}$ & $0$ & $b$ & $e$ & $c$ & $m$ & $n$ & $k$ & $d$ & $a$ \\
\hline
  $0$ & $0$ & $0$ & $0$ & $c$ & $m$ & $n$ & $k$ & $d$ & $a$ \\

  $b$ & $0$ & $b$ & $b$ & $c$ & $m$ & $n$ & $k$ & $d$ & $a$ \\

  $e$ & $0$ & $b$ & $e$ & $c$ & $m$ & $n$ & $k$ & $d$ & $a$ \\

  $c$ & $c$ & $c$ & $c$ & $c$ & $d$ & $d$ & $d$ & $d$ & $a$ \\

  $m$ & $m$ & $m$ & $m$ & $d$ & $m$ & $n$ & $d$ & $d$ & $a$ \\

  $n$ & $n$ & $n$ & $n$ & $d$ & $n$ & $n$ & $d$ & $d$ & $a$ \\

  $k$ & $k$ & $k$ & $k$ & $d$ & $d$ & $d$ & $k$ & $d$ & $a$ \\

  $d$ & $d$ & $d$ & $d$ & $d$ & $d$ & $d$ & $d$ & $d$ & $a$ \\

  $a$ & $a$ & $a$ & $a$ & $a$ & $a$ & $a$ & $a$ & $a$ & $a$ \\
\hline
\end{tabular}
\end{table}

\begin{table}[htbp]
\centering
\caption{$U_{1}$ on $L_{1}$.}
\label{Tab:057}\

\begin{tabular}{c c c c c c c c c c c c c c}
\hline
  $U_{1}$ & $0$ & $b$ & $e$ & $c$ & $m$ & $n$ & $k$ & $d$ & $a$ & $l$ & $s$ & $f$ & $1$ \\
\hline
  $0$ & $0$ & $0$ & $0$ & $c$ & $m$ & $n$ & $k$ & $d$ & $a$ & $l$ & $s$ & $f$ & $1$ \\

  $b$ & $0$ & $b$ & $b$ & $c$ & $m$ & $n$ & $k$ & $d$ & $a$ & $l$ & $s$ & $f$ & $1$ \\

  $e$ & $0$ & $b$ & $e$ & $c$ & $m$ & $n$ & $k$ & $d$ & $a$ & $l$ & $s$ & $f$ & $1$ \\

  $c$ & $c$ & $c$ & $c$ & $c$ & $d$ & $d$ & $d$ & $d$ & $a$ & $f$ & $s$ & $f$ & $1$ \\

  $m$ & $m$ & $m$ & $m$ & $d$ & $m$ & $n$ & $d$ & $d$ & $a$ & $l$ & $f$ & $f$ & $1$ \\

  $n$ & $n$ & $n$ & $n$ & $d$ & $n$ & $n$ & $d$ & $d$ & $a$ & $f$ & $f$ & $f$ & $1$ \\

  $k$ & $k$ & $k$ & $k$ & $d$ & $d$ & $d$ & $k$ & $d$ & $a$ & $f$ & $f$ & $f$ & $1$ \\

  $d$ & $d$ & $d$ & $d$ & $d$ & $d$ & $d$ & $d$ & $d$ & $a$ & $f$ & $f$ & $f$ & $1$ \\

  $a$ & $a$ & $a$ & $a$ & $a$ & $a$ & $a$ & $a$ & $a$ & $a$ & $f$ & $f$ & $f$ & $1$ \\

  $l$ & $l$ & $l$ & $l$ & $f$ & $l$ & $f$ & $f$ & $f$ & $f$ & $l$ & $f$ & $f$ & $1$ \\

  $s$ & $s$ & $s$ & $s$ & $s$ & $f$ & $f$ & $f$ & $f$ & $f$ & $f$ & $s$ & $f$ & $1$ \\

  $f$ & $f$ & $f$ & $f$ & $f$ & $f$ & $f$ & $f$ & $f$ & $f$ & $f$ & $f$ & $f$ & $1$ \\

  $1$ & $1$ & $1$ & $1$ & $1$ & $1$ & $1$ & $1$ & $1$ & $1$ & $1$ & $1$ & $1$ & $1$ \\
\hline
\end{tabular}
\end{table}

\begin{remark}\label{re34KK}
Let $U_{1}$ be a uninorm in Theorem \ref{th31}.\\
$(1)$  $U_{1}$ is disjunctive, i.e., $U_{1}(0,1)=1$.\\
$(2)$ If $a=1$, then $U_{1}=U^{*}$.\\
$(3)$ $U_{1}$ is idempotent if and only if $U^{*}$ is idempotent. This shows that the property of $U_{1}$ are closely related to that of $U^{*}$.\\
$(4)$ $U_{1}\in \mathcal{U}_{max}^{*}$ if and only if $U^{*}\in \mathcal{U}_{max}^{*}$.
\end{remark}

\begin{remark}

Remark \ref{re34KK}(4)  shows that we can easily construct the uninorms, which need  not  belong to the class of $\mathcal{U}_{max}^{*}$. In Theorem \ref{th31},  if $U^{*}\notin \mathcal{U}_{max}^{*}$,  then the uninorm $U_{1}$  does  not belong to $\mathcal{U}_{max}^{*}$. In fact, we can give a example for the uninorm $U^{*}$ on $ [0,a]$ of $L_{2}$,  shown in Table \ref{table1},   satisfying $U^{*}\notin \mathcal{U}_{max}^{*}$ and the conditions in Theorem \ref{th31}. Furthermore,  we can construct  a  uninorm $U_{1}$  such that  $U_{1}\notin \mathcal{U}_{max}^{*}$ by $U^{*}$.
\end{remark}

\begin{minipage}{11pc}
\setlength{\unitlength}{0.75pt}\begin{picture}(600,150)
\put(270,20){\circle{2}}\put(267,12){\makebox(0,0)[l]{\footnotesize$0$}}
\put(270,40){\circle{2}}\put(275,40){\makebox(0,0)[l]{\footnotesize$b$}}
\put(270,60){\circle{2}}\put(274,63){\makebox(0,0)[l]{\footnotesize$e$}}
\put(270,80){\circle{2}}\put(260,82){\makebox(0,0)[l]{\footnotesize$c$}}
\put(270,100){\circle{2}}\put(260,103){\makebox(0,0)[l]{\footnotesize$d$}}
\put(270,120){\circle{2}}\put(275,120){\makebox(0,0)[l]{\footnotesize$a$}}
\put(270,140){\circle{2}}\put(268,150){\makebox(0,0)[l]{\footnotesize$1$}}
\put(232,80){\circle{2}}\put(222,80){\makebox(0,0)[l]{\footnotesize$l$}}
\put(250,60){\circle{2}}\put(236,60){\makebox(0,0)[l]{\footnotesize$m$}}

\put(270,21){\line(0,1){18}}
\put(270,41){\line(0,1){18}}
\put(270,41){\line(-1,1){19}}
\put(250,61){\line(1,1){18}}
\put(270,61){\line(0,1){18}}
\put(270,81){\line(0,1){18}}
\put(270,101){\line(0,1){18}}
\put(270,121){\line(0,1){18}}
\put(250,61){\line(-1,1){19}}
\put(231,81){\line(2,3){39}}

\put(200,-10){\emph{Fig.2. The lattice $L_{2}$}}
\end{picture}
\end{minipage}

\begin{table}[htbp]
\centering
\caption{$U^{*}$ on $[0,a]$.}
\label{table1}\

\begin{tabular}{c c c c c c c c c c}
\hline
  $U^{*}$ & $0$ & $b$ & $e$ & $m$ & $c$ & $d$ & $a$ \\
\hline
  $0$ & $0$ & $0$ & $0$ & $m$ & $c$ & $c$ & $a$ \\

  $b$ & $0$ & $b$ & $b$ & $m$ & $c$ & $c$ & $a$ \\

  $e$ & $0$ & $b$ & $e$ & $m$ & $c$ & $d$ & $a$ \\

  $m$ & $m$ & $m$ & $m$ & $m$ & $a$ & $a$ & $a$ \\

  $c$ & $c$ & $c$ & $c$ & $a$ & $a$ & $a$ & $a$ \\

  $d$ & $c$ & $c$ & $d$ & $a$ & $a$ & $a$ & $a$ \\

  $a$ & $a$ & $a$ & $a$ & $a$ & $a$ & $a$ & $a$ \\
\hline
\end{tabular}
\end{table}


\begin{theorem}\label{th32}
Let $U^{*}$ be a uninorm on $[b,1]$ with a neutral element $e$ for $b\in L\setminus\{0,1\}$. Then the function $U_{2}(x,y):L^{2}\rightarrow L$ defined by

$U_{2}(x,y)=\begin{cases}
U^{*}(x, y) &\mbox{if } (x,y)\in [b,1]^{2},\\
x &\mbox{if } (x,y)\in I_{e,b}\times [e,1],\\
y &\mbox{if } (x,y)\in [e,1]\times I_{e,b},\\
x\wedge y &\mbox{}otherwise,\\
\end{cases}$\\
is a uninorm on $L$ with the neutral element $e\in L $ if and only if $U^{*}$ satisfies  the following conditions:\\
$(1)$ for   $z\in I_{e,b}$, if  $(x,y)\in((I_{e}^{b}\cup [b,e))\cap\{l\in L\ |\ l \nparallel z\})^{2} $, then $U^{*}(x, y)\nparallel z$;\\
$(2)$ for   $z\in I_{e,b}$, if $(x,y)\in[b,1]\times((I_{e}^{b}\cup [b,e))\cap\{l\in L\ |\ l \parallel z\})\cup((I_{e}^{b}\cup [b,e))\cap\{l\in L\ |\ l \parallel z\})\times[b,1]$, then $U^{*}(x, y)\parallel z$;\\
$(3)$ for  $z\in I_{b}^{e}$, if   $(x,y)\in((I_{e}^{b}\cup [b,e))\cap\{l\in L\ |\ l \nparallel z\})^{2} $, then $U^{*}(x, y)\nparallel z  $;\\
$(4)$ for  $z\in I_{b}^{e}$,  if  $(x,y)\in[b,1]\times((I_{e}^{b}\cup [b,e))\cap\{l\in L\ |\ l \parallel z\})\cup((I_{e}^{b}\cup [b,e))\cap\{l\in L\ |\ l \parallel z\})\times[b,1]$, then $U^{*}(x, y)\parallel z$.
\end{theorem}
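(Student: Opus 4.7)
The plan is to mirror the proof of Theorem \ref{th31} under the order-dual perspective: $\wedge$ replaces $\vee$, and the roles of $[0,a]$, $(e,a]$, $I_e^a$, $I_a^e$, $I_{e,a}$ are taken by their upper-end counterparts $[b,1]$, $[b,e)$, $I_e^b$, $I_b^e$, $I_{e,b}$ respectively. Since the construction of $U_2$ is defined purely in terms of $U^*$, the lattice order, and the meet, the whole argument transfers in a symmetric way, and I would carry it out in the same two stages.

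For the necessity direction, I would assume $U_2$ is a uninorm and derive each of the four conditions from either monotonicity or associativity, exactly as in Theorem \ref{th31}. For condition (1), suppose there exist $z\in I_{e,b}$ and $(x,y)\in ((I_e^b\cup[b,e))\cap\{l:l\nparallel z\})^2$ with $U^*(x,y)\parallel z$. Then $U_2(x,z)=x\wedge z$ (the ``otherwise'' branch) and $U_2(U^*(x,y),z)$ can be computed from the defining clauses; the incomparability of $U^*(x,y)$ with $z$ would contradict the increasingness of $U_2$. Conditions (2)--(4) are obtained by testing associativity on a carefully chosen triple $(x,y,z)$: for (2), one side of $U_2(x,U_2(y,z))=U_2(U_2(x,y),z)$ collapses to $z\wedge b$ while the other collapses to $z$, forcing $U^*(x,y)\parallel z$; conditions (3) and (4) follow by the analogous triples now using $z\in I_b^e$.

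For the sufficiency direction, commutativity and the neutral-element property follow directly from the definition, using that $e$ is the neutral element of $U^*$ and that the defining clauses give $U_2(e,y)=y$ for $y\in I_{e,b}$. Monotonicity requires a long but routine case analysis splitting $L$ into the regions $[b,1]$ (further partitioned into $[b,e)$, $\{e\}$, $(e,1]$, and $I_e^b$), $I_{e,b}$, $I_b^e$, and the remainder lying below $b$, using the monotonicity of $U^*$ inside $[b,1]$ and the monotonicity of $\wedge$ in the ``otherwise'' branch. The associativity verification, which will be the main obstacle, is handled by invoking Proposition \ref{pro2.1} to reduce to a finite catalogue of triples $(x,y,z)$; most triples reduce immediately either to the associativity of $U^*$ or to the trivial identity $x\wedge y\wedge z=(x\wedge y)\wedge z$.

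The genuinely delicate cases are those in which $x,y\in I_e^b\cup[b,e)$ and $z\in I_{e,b}\cup I_b^e$, together with the mixed cases where one of $x,y$ lies in $[b,1]$ and the other in $I_{e,b}$ and $z$ lies in the incomparability regions: here both sides of the associativity equation must be forced to collapse to either $z$ or $z\wedge b$, and this collapse is guaranteed precisely by conditions (1)--(4), which dictate whether $U^*(x,y)$ is comparable or incomparable with $z$. Thus the conditions on $U^*$ are not ad hoc but are the minimal requirements that make the nested applications of $U_2$ match; once the correct pairing of sides is identified in each subcase, the remaining verification is mechanical.
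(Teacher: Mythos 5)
Your proposal is correct and takes essentially the same approach as the paper: the paper proves Theorem \ref{th32} by observing that it is the order-dual of Theorem \ref{th31}, which is exactly the dualization you carry out (replacing $\vee$ by $\wedge$ and the regions $[0,a]$, $(e,a]$, $I_{e}^{a}$, $I_{a}^{e}$, $I_{e,a}$ by $[b,1]$, $[b,e)$, $I_{e}^{b}$, $I_{b}^{e}$, $I_{e,b}$). Your division of labour --- condition $(1)$ from monotonicity, conditions $(2)$--$(4)$ from associativity tests collapsing to $z$ versus $z\wedge b$, and Proposition \ref{pro2.1} to organize the associativity cases --- mirrors the paper's proof of Theorem \ref{th31} precisely.
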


\begin{proof}
It can be proved immediately by a proof similar to Theorem \ref{th31}.
\end{proof}

\begin{remark}\label{}
If taking $e=b$ in Theorem \ref{th32}, then  $[b,1]=[e,1]$, $I_{e,b}=I_{e}$, $I_{e}^{b}\cup I_{b}^{e}\cup [b,e)=\emptyset$ and $U^{*}$ is a $t$-conorm on $[b,1]$. Moreover, in this case, the conditions $(1)$, $(2)$, $(3)$ and $(4)$ in Theorem \ref{th32} naturally hold.

By the above fact, if taking $e=b$ in Theorem \ref{th31}, then we retrieve the uninorm $U_{s_{1}}: L^{2}\rightarrow L$ constructed by \c{C}ayl{\i}, Kara\c{c}al, and Mesiar (\cite{GD16}, Theorem 1) as follow.

$U_{s_{1}}(x,y)=\begin{cases}
S_{e}(x, y) &\mbox{if } (x,y)\in [e,1]^{2},\\
x &\mbox{if } (x,y)\in I_{e}\times[e,1],\\
y &\mbox{if } (x,y)\in [e,1]\times I_{e},\\
x\wedge y &\mbox{}otherwise.\\
\end{cases}$
\end{remark}


\begin{remark}\label{re36K}
In Theorem \ref{th32}, if taking $e=1$, then $U^{*}$ is a $t$-norm on $[b,1]$ and $I_{e,b}\cup I_{e}^{b}=\emptyset$. Thus,  the conditions $(1)$ and $(2)$ hold and $I_{e}^{b}\cup [b,e)=[b,e)=[b,1)$. In this case,  the condition $(4)$ in Theorem \ref{th32} holds.
\end{remark}

By  Remark \ref{re36K}, If $e=1$ in Theorem \ref{th32}, then   the following proposition holds.

\begin{proposition}\label{co444}
Let $T$ be a $t$-norm on $[b,1]$ for  $b\in L\setminus\{0,1\}$. Then the function $T_{1}(x,y):L^{2}\rightarrow L$ defined by

$T_{1}(x,y)=\begin{cases}
T(x, y) &\mbox{if } (x,y)\in [b,1]^{2},\\
x\wedge y &\mbox{}otherwise,\\
\end{cases}$\\
is a $t$-norm on $L$ if and only if
for   $z\in I_{b}$, if   $(x,y)\in ( [b,1)\cap\{l\in L |\ l \nparallel z\})^{2} $, then $T(x, y)\nparallel z  $.
\end{proposition}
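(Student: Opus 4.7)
The plan is to obtain Proposition \ref{co444} directly as the specialization of Theorem \ref{th32} to the case $e=1$, combined with Remark \ref{re36K}. First I would verify that, under $e=1$, the piecewise definition of $U_{2}$ in Theorem \ref{th32} collapses to the piecewise definition of $T_{1}$ given here: since $I_{e,b}=I_{1,b}=\emptyset$ (no element of a bounded lattice is incomparable with $1$), the two branches $U_{2}(x,y)=x$ on $I_{e,b}\times[e,1]$ and $U_{2}(x,y)=y$ on $[e,1]\times I_{e,b}$ disappear, while $U^{*}$ restricted to $[b,1]^{2}$ is precisely a $t$-norm on $[b,1]$ because its neutral element is $1$. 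Moreover, the resulting operation inherits $1$ as a neutral element on all of $L$, so it is automatically a $t$-norm on $L$ rather than just a uninorm.

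Next I would invoke Remark \ref{re36K} to dispose of conditions (1), (2), (4) of Theorem \ref{th32}. Conditions (1) and (2) are universally quantified over $z\in I_{e,b}=\emptyset$ and are therefore vacuous. Condition (4), which in this setting is stated over $z\in I_{b}^{e}=I_{b}^{1}=I_{b}$, is recorded in Remark \ref{re36K} as an automatic consequence of $U^{*}$ being a $t$-norm on $[b,1]$; the underlying reasoning mirrors Lemma \ref{Le32}: for such $z$, if $y\in[b,1)$ with $y\parallel z$ and $x\in[b,1]$, then $T(x,y)\in[b,y]$, and both $T(x,y)\leq z$ and $T(x,y)\geq z$ are ruled out, the former by $y\parallel z$ together with $T(x,y)\leq y$ and the latter by $b\parallel z$ together with $T(x,y)\geq b$, forcing $T(x,y)\parallel z$.

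Finally, I would observe that condition (3) of Theorem \ref{th32} with $e=1$ becomes exactly the stated constraint of Proposition \ref{co444}: with $I_{b}^{e}=I_{b}$ and $I_{e}^{b}\cup[b,e)=[b,1)$, it reads ``for every $z\in I_{b}$, if $(x,y)\in([b,1)\cap\{l\in L\mid l\nparallel z\})^{2}$, then $T(x,y)\nparallel z$.'' Consequently the biconditional of Proposition \ref{co444} follows from the biconditional of Theorem \ref{th32}.

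The only mild obstacle is making explicit why condition (4) is not imposing any new restriction; this is essentially the dual-lattice analogue of the one-line verification in Lemma \ref{Le32}, and it is what Remark \ref{re36K} is pointing to. Once that is recorded, the remainder of the argument is a matter of rewriting the surviving hypothesis of Theorem \ref{th32} in the simplified form visible in the statement.
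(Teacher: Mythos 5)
Your proposal matches the paper's proof exactly: the paper obtains Proposition \ref{co444} by setting $e=1$ in Theorem \ref{th32} and invoking Remark \ref{re36K} to discharge conditions (1), (2) (vacuous since $I_{e,b}=\emptyset$) and (4) (automatic for a $t$-norm, by the dual of Lemma \ref{Le32}), leaving condition (3) as the stated criterion. One trivial slip in your write-up of condition (4): the two exclusions are paired with the wrong witnesses --- $T(x,y)\leq z$ is ruled out because $b\leq T(x,y)\leq z$ would force $b\leq z$ against $z\in I_{b}$, while $T(x,y)\geq z$ is ruled out because $z\leq T(x,y)\leq y$ would force $z\leq y$ against $y\parallel z$ --- but all the needed facts are present and the argument is otherwise sound.
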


\begin{remark}\label{re}

In Proposition \ref{co444}, we  give a sufficient and necessary condition under which $T_{1}$ is a $t$-norm. Obviously,  this condition  differs from that in Theorem \ref{th250}.
 More precisely, our constraint condition is based on  the viewpoint of $t$-norms; the constraint condition in Theorem \ref{th250} is based on the viewpoint of  $L$.


\end{remark}

\begin{remark}\label{re38K}
Let $U_{2}$ be a uninorm in Theorem \ref{th32}.\\
$(1)$  $U_{2}$ is conjunctive, i.e., $U_{2}(0,1)=0$.\\
$(2)$ If $b=0$, then $U_{2}=U^{*}$.\\
$(3)$ $U_{2}$ is idempotent if and only if $U^{*}$ is idempotent. This shows that the property of $U_{2}$ are closely related to that of $U^{*}$.\\
$(4)$ $U_{2}\in \mathcal{U}_{min}^{*}$ if and only if $U^{*}\in \mathcal{U}_{min}^{*}$.
\end{remark}


\section{Constructing uninorms via given uninorms based on closure and interior operators}

In this section, we mainly construct new uninorms by extending given uninorms based on interior operators and closure operators.

For convenience,
$\mathcal{U}_{\bot}^{*}$ denotes the class of all
uninorms $U$ on $L$ with neutral element $e$ satisfying $U(x, y)\in[0,e]  $ iff $(x,y)\in [0,e]^{2}$.  Similarly, $\mathcal{U}_{\top}^{*}$ denotes  the class of all uninorms $U$ on $L$ with neutral element $e$ satisfying  $U(x, y)\in[e,1]  $ iff $(x,y)\in [e,1]^{2}$.

\begin{theorem}\label{th33}
Let  $U^{*}$ be a uninorm on $[0,a]$ with a neutral element $e$ for $a\in L\setminus\{0,1\}$ and $cl$ be a closure operator. Let $U_{3}(x,y):L^{2}\rightarrow L$ be a  function defined as follow:

$U_{3}(x,y)=\begin{cases}
U^{*}(x, y) &\mbox{if } (x,y)\in [0,a]^{2},\\
x &\mbox{if } (x,y)\in (L\setminus[0,a])\times [0,e],\\
y &\mbox{if } (x,y)\in [0,e]\times (L\setminus[0,a]),\\
cl(x)\vee cl(y) &\mbox{if } (x,y)\in I_{e,a}\times I_{e,a},\\
1 &\mbox{}otherwise.\\
\end{cases}$

$(1)$ Suppose that $cl(x)\vee cl(y)\in I_{e,a}$ for all $x,y\in I_{e,a} $.

 $\mathrm{(i)}$    If  $U^{*}\in\mathcal{U}_{\bot}^{*}$, then $U_{3}$ is a uninorm  with the neutral element $e\in L $  if and only if  $x\parallel y$ for all $x\in I_{e,a}$ and $y\in I_{e}^{a}$.

 $\mathrm{(ii)}$ If  $I_{e,a}\cup I_{a}^{e}\cup (a,1)\neq\emptyset$, then   $U_{3}$ is a uninorm   with the neutral element $e\in L $ if and only if  $U^{*}\in\mathcal{U}_{\bot}^{*}$ and $x\parallel y$ for all $x\in I_{e,a}$ and $y\in I_{e}^{a}$.

$(2)$  Suppose that $cl(x)\vee cl(y)\in (a,1]$ for all $x,y\in I_{e,a} $.

$\mathrm{(i)}$ If $x\parallel y$ for all $x\in I_{e,a}, y\in I_{e}^{a} $ and $U^{*}\in\mathcal{U}_{\bot}^{*}$, then $U_{3}$ is a uninorm   with the neutral element $e\in L $.

$\mathrm{(ii)}$  If $cl(x)\vee cl(y) < 1$ for all $x,y\in I_{e,a}$ and $I_{e,a}\cup I_{a}^{e}\cup (a,1)\neq\emptyset$, then $U_{3}$ is a uninorm  with the neutral element $e\in L $  if and only if  $x\parallel y$ for all $x\in I_{e,a}, y\in I_{e}^{a} $ and $U^{*}\in\mathcal{U}_{\bot}^{*}$.
\end{theorem}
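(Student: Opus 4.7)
The plan is to partition $L$ as the disjoint union $[0,e]\cup(e,a]\cup I_{e}^{a}\cup I_{e,a}\cup I_{a}^{e}\cup(a,1]$ and verify the four uninorm axioms for $U_{3}$ on this decomposition. Commutativity and the neutral-element identity $U_{3}(e,x)=x$ follow immediately from the piecewise definition together with the symmetry of the fourth clause in $cl$, so the genuine work in each of the four sub-statements reduces to increasingness and associativity.

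For sufficiency I would first record two tools used repeatedly: the monotonicity of $cl$ (from $cl(x\vee y)=cl(x)\vee cl(y)$, so $x\leq y$ implies $cl(x)\leq cl(y)$), and the identity $cl(cl(x)\vee cl(y))=cl(x)\vee cl(y)$ coming from idempotence of $cl$ combined with preservation of $\vee$. Increasingness is then checked by taking $x\leq y, z\in L$ and running through the region-triples; most boundary cases reduce either to the increasingness of $U^{*}$ or to $\vee$-monotonicity. The genuinely delicate transitions are $(x,y)\in[0,e]\times I_{e,a}$ paired against $z\in I_{e}^{a}$, and $(x,y)\in I_{e}^{a}\times I_{e,a}$ paired against $z\in I_{e,a}$; this is exactly where the hypothesis that every element of $I_{e,a}$ is incomparable with every element of $I_{e}^{a}$ forbids the ordering that would otherwise produce a monotonicity failure. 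The hypothesis $U^{*}\in\mathcal{U}_{\bot}^{*}$ enters when I need to preclude $U^{*}(x,y)\in[0,e]$ for $(x,y)\notin[0,e]^{2}$, since then extension by some $z\in L\setminus[0,a]$ would generate incompatible values on the two sides. For associativity I invoke Proposition~\ref{pro2.1} with the six-part partition. Triples inside $[0,a]^{2}$ use associativity of $U^{*}$; triples inside $L\setminus[0,a]$ use $\vee$-associativity together with $cl$-idempotence; mixed triples are handled by direct unfolding. In case (1), the hypothesis $cl(x)\vee cl(y)\in I_{e,a}$ is precisely what lets both sides of $U_{3}(x,U_{3}(y,z))=U_{3}(U_{3}(x,y),z)$ collapse to $cl(x)\vee cl(y)\vee cl(z)$, while in case (2) the jump $cl(x)\vee cl(y)\in(a,1]$ forces both sides into the ``otherwise'' branch and they collapse to $1$.

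For necessity I construct counterexamples. If $U^{*}\notin\mathcal{U}_{\bot}^{*}$, there exist $(x,y)\in[0,a]^{2}\setminus[0,e]^{2}$ with $U^{*}(x,y)\in[0,e]$; picking any $z\in I_{e,a}\cup I_{a}^{e}\cup(a,1)$, which is nonempty by the standing assumption of (1)(ii) and (2)(ii), one checks that $U_{3}(U_{3}(x,y),z)=U^{*}(x,y)\in[0,e]$ whereas $U_{3}(x,U_{3}(y,z))$ lies outside $[0,e]$, violating associativity. If instead there are $x\in I_{e,a}$ and $y\in I_{e}^{a}$ with $y\leq x$ (the case $x\leq y$ being symmetric), then for $z\in I_{e,a}$ the value $U_{3}(y,z)$ equals $1$ via the ``otherwise'' clause, whereas $U_{3}(x,z)=cl(x)\vee cl(z)$ lies strictly below $1$ under the working hypothesis ($\in I_{e,a}$ in case (1), $<1$ in case (2)(ii)). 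Hence $U_{3}(y,z)\not\leq U_{3}(x,z)$ and increasingness fails. These two witnesses deliver necessity uniformly across (1)(i), (1)(ii), and (2)(ii).

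The principal obstacle will be the associativity step, which fans out into roughly a dozen sub-cases based on how the three arguments distribute across the six regions; the bookkeeping is heavy, but each individual sub-case is a one-step unfolding of the piecewise definition followed by the identity $cl(cl(x)\vee cl(y))=cl(x)\vee cl(y)$. A secondary subtlety is to make sure the necessity counterexamples remain valid under the weakest hypothesis available in each sub-statement: for (2)(ii) the strict inequality $cl(x)\vee cl(y)<1$ is what prevents the monotonicity counterexample from trivializing, and in (1)(i) the existence of an offending $x\in I_{e,a}$ is what implicitly supplies the required nonempty test element.
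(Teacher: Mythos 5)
Your proposal follows essentially the same route as the paper: the same six-region partition of $L$, commutativity and the neutral element read off the definition, increasingness and associativity by case analysis via Proposition~\ref{pro2.1}, and necessity by exhibiting monotonicity/associativity violations; your identification of where the hypotheses $x\parallel y$ (for $x\in I_{e,a}$, $y\in I_{e}^{a}$) and $U^{*}\in\mathcal{U}_{\bot}^{*}$ enter matches the paper's use of them.

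One concrete slip in your necessity argument for $U^{*}\in\mathcal{U}_{\bot}^{*}$: you assert $U_{3}(U_{3}(x,y),z)=U^{*}(x,y)\in[0,e]$ and contrast it with the other side lying outside $[0,e]$. That evaluation is wrong: with $U^{*}(x,y)\in[0,e]$ and $z\in I_{e,a}\cup I_{a}^{e}\cup(a,1)\subseteq L\setminus[0,a]$, the third clause of the definition returns the \emph{second} argument, so $U_{3}(U^{*}(x,y),z)=z$, which is itself outside $[0,e]$. The contradiction you state (``one side in $[0,e]$, the other not'') therefore does not hold as written. The argument is immediately repairable and then coincides with the paper's: $U_{3}(x,U_{3}(y,z))=U_{3}(x,1)=1$ while $U_{3}(U_{3}(x,y),z)=z<1$, so associativity still fails; your uniform treatment of all $(x,y)\in[0,a]^{2}\setminus[0,e]^{2}$ by this single counterexample is in fact slightly cleaner than the paper's split into two sub-cases (the paper handles $(x,y)\in(e,a]^{2}\cup(e,a]\times I_{e}^{a}\cup I_{e}^{a}\times(e,a]$ directly from $y=U^{*}(e,y)\leq U^{*}(x,y)$ rather than by contradiction). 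Your monotonicity counterexample for the $x\parallel y$ condition is a minor variant of the paper's (you compare $U_{3}(y,z)$ with $U_{3}(x,z)$ for $z\in I_{e,a}$, the paper takes $z=x$ in the second slot), and both are valid.
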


\begin{proof}
$(1)\mathrm{(i)}$ Necessity. Let $U_{3}(x,y)$ be a uninorm with a neutral element $e$ and $cl(x)\vee cl(y)\in I_{e,a}$ for all $x,y\in I_{e,a}$. We prove that $x\parallel y$ for all $x\in I_{e,a}$ and $y\in I_{e}^{a} $.

Assume that there exist $x\in I_{e,a}$ and $y\in I_{e}^{a} $ such that $x\nparallel y$, i.e., $y<x$. Then $U_{3}(x,y)=1$ and $U_{3}(x,x)=cl(x)\vee cl(x)=cl(x) $. Since $cl(x) <1$, the  increasingness property of $U_{3}$ is violated. Thus $x\parallel y$   for all $x\in I_{e,a}$ and $y\in I_{e}^{a} $.

Sufficiency. By the definition of $U_{3}$, $U_{3}$ is commutative  and  $e$  is the neutral element of $U_{3}$. Thus, we only need to prove the increasingness and the associativity of $U_{3}$.

I. Increasingness: We prove that if $x\leq y$, then $U_{3}(x,z)\leq U_{3}(y,z)$ for all $z\in L$. It is easy to verify  that $U_{3}(x,z)\leq U_{3}(y,z)$ if both $x$ and $y$ belong to one of the  intervals $ [0,e], I_{e}^{a}, (e,a], I_{a}^{e}, I_{e,a}$ or $(a,1]$ for all $z\in L$. The proof is split into all possible cases.

1. $x\in [0,e]$

\ \ \ 1.1. $y\in I_{e}^{a}\cup (e,a]$

\ \ \ \ \ \ 1.1.1. $z\in [0,e]\cup I_{e}^{a}\cup (e,a]$

\ \ \ \ \ \ \ \ \ \ \ \ $U_{3}(x,z)=U^{*}(x,z)\leq U^{*}(y,z)=U_{3}(y,z)$

\ \ \ \ \ \ 1.1.2. $z\in I_{a}^{e}\cup I_{e,a}\cup(a,1]$

\ \ \ \ \ \ \ \ \ \ \ \ $U_{3}(x,z)=z\leq 1=U_{3}(y,z)$

\ \ \ 1.2. $y\in I_{a}^{e}\cup (a,1]$

\ \ \ \ \ \ 1.2.1. $z\in [0,e]$

\ \ \ \ \ \ \ \ \ \ \ \ $U_{3}(x,z)=U^{*}(x,z)\leq x<y =U_{3}(y,z)$

\ \ \ \ \ \ 1.2.2. $z\in I_{e}^{a}\cup (e,a]$

\ \ \ \ \ \ \ \ \ \ \ \ $U_{3}(x,z)=U^{*}(x,z)\leq a<1=U_{3}(y,z)$

\ \ \ \ \ \ 1.2.3. $z\in I_{a}^{e}\cup I_{e,a}\cup(a,1]$

\ \ \ \ \ \ \ \ \ \ \ \ $U_{3}(x,z)=z\leq 1=U_{3}(y,z)$

\ \ \ 1.3. $y\in I_{e,a}$

\ \ \ \ \ \ 1.3.1. $z\in [0,e]$

\ \ \ \ \ \ \ \ \ \ \ \ $U_{3}(x,z)=U^{*}(x,z)\leq x<y =U_{3}(y,z)$

\ \ \ \ \ \ 1.3.2. $z\in I_{e}^{a}\cup (e,a]$

\ \ \ \ \ \ \ \ \ \ \ \ $U_{3}(x,z)=U^{*}(x,z)\leq a<1=U_{3}(y,z)$

\ \ \ \ \ \ 1.3.3. $z\in I_{a}^{e} \cup(a,1]$

\ \ \ \ \ \ \ \ \ \ \ \ $U_{3}(x,z)=z\leq 1=U_{3}(y,z)$

\ \ \ \ \ \ 1.3.4. $z\in   I_{e,a} $

\ \ \ \ \ \ \ \ \ \ \ \ $U_{3}(x,z)=z\leq cl(y)\vee cl(z)=U_{3}(y,z)$

2. $x\in I_{e}^{a}$

\ \ \ 2.1. $y\in (e,a]$

\ \ \ \ \ \ 2.1.1. $z\in [0,e]\cup I_{e}^{a}\cup (e,a]$

\ \ \ \ \ \ \ \ \ \ \ \ $U_{3}(x,z)=U^{*}(x,z)\leq U^{*}(y,z)=U_{3}(y,z)$

\ \ \ \ \ \ 2.1.2. $z\in I_{a}^{e}\cup I_{e,a}\cup (a,1]$

\ \ \ \ \ \ \ \ \ \ \ \ $U_{3}(x,z)=1=U_{3}(y,z)$

\ \ \ 2.2. $y\in I_{a}^{e} \cup(a,1]$

\ \ \ \ \ \ 2.2.1. $z\in [0,e]$

\ \ \ \ \ \ \ \ \ \ \ \ $U_{3}(x,z)=U^{*}(x,z)\leq x<y =U_{3}(y,z)$

\ \ \ \ \ \ 2.2.2. $z\in I_{e}^{a}\cup (e,a]$

\ \ \ \ \ \ \ \ \ \ \ \ $U_{3}(x,z)=U^{*}(x,z)\leq a<1=U_{3}(y,z)$

\ \ \ \ \ \ 2.2.3. $z\in I_{a}^{e}\cup I_{e,a}\cup (a,1]$

\ \ \ \ \ \ \ \ \ \ \ \ $U_{3}(x,z)=1=U_{3}(y,z)$

3. $x\in (e,a], y\in I_{a}^{e}\cup(a,1]$

\ \ \ 3.1. $z\in [0,e]$

\ \ \ \ \ \ \ \ \ \ \ \ $U_{3}(x,z)=U^{*}(x,z)\leq x<y =U_{3}(y,z)$

\ \ \ 3.2. $z\in I_{e}^{a}\cup (e,a]$

\ \ \ \ \ \ \ \ \ \ \ \ $U_{3}(x,z)=U^{*}(x,z)\leq a<1=U_{3}(y,z)$

\ \ \ 3.3. $z\in I_{a}^{e}\cup I_{e,a}\cup(a,1]$

\ \ \ \ \ \ \ \ \ \ \ \ $U_{3}(x,z)=1=U_{3}(y,z)$

4. $x\in I_{a}^{e}, y\in (a,1]$

\ \ \ 4.1. $z\in [0,e]$

\ \ \ \ \ \ \ \ \ \ \ \ $U_{3}(x,z)=x\leq y =U_{3}(y,z)$

\ \ \ 4.2. $z\in I_{e}^{a}\cup(e,a]\cup I_{a}^{e}\cup I_{e,a}\cup(a,1]$

\ \ \ \ \ \ \ \ \ \ \ \ $U_{3}(x,z)=1=U_{3}(y,z)$

5. $x\in I_{e,a}, y\in I_{a}^{e}\cup(a,1]$

\ \ \ 5.1. $z\in [0,e]$

\ \ \ \ \ \ \ \ \ \ \ \ $U_{3}(x,z)=x< y =U_{3}(y,z)$

\ \ \ 5.2. $z\in I_{e}^{a}\cup(e,a]\cup I_{a}^{e}\cup (a,1]$

\ \ \ \ \ \ \ \ \ \ \ \ $U_{3}(x,z)=1=U_{3}(y,z)$

\ \ \ 5.3. $z\in I_{e,a} $

\ \ \ \ \ \ \ \ \ \ \ \ $U_{3}(x,z)=cl(x)\vee cl(z)<1=U_{3}(y,z)$

II. Associativity: We demonstrate that $U_{3}(x,U_{3}(y,z))=U_{3}(U_{3}(x,y),z)$ for all $x,y,z\in L$. By Proposition  \ref{pro2.1},  we just consider the following cases.

1. If $x,y,z\in [0,e]\cup I_{e}^{a}\cup (e,a]$, then $U_{3}(x,U_{3}(y,z))
=U_{3}(U_{3}(x,y),z)
=U_{3}(y,U_{3}(x,z))$ for $U^{*}$ is associative.

2. If $x,y,z\in I_{a}^{e}\cup (a,1]$, then $U_{3}(x,U_{3}(y,z))=U_{3}(x,1)=1=U_{3}(1,z)=U_{3}(U_{3}(x,y),z)$.

3. If $x,y,z\in  I_{e,a} $, then $U_{3}(x,U_{3}(y,z))=U_{3}(x,cl(y)\vee cl(z))=cl(x)\vee cl(y)\vee cl(z)=U_{3}(cl(x)\vee cl(y),z)= U_{3}(U_{3}(x,y),z)$.

4. If $x,y\in [0,e]$ and $z\in I_{a}^{e}\cup I_{e,a}\cup  (a,1]$, then $U_{3}(x,U_{3}(y,z))=U_{3}(x,z)=z =U_{3}(U^{*}(x,y),z)=U_{3}(U_{3}(x,y),z) $.

5. If $x,y\in I_{e}^{a}\cup (e,a]$ and $z\in I_{a}^{e}\cup I_{e,a}\cup  (a,1]$, then $U_{3}(x,U_{3}(y,z))=U_{3}(x,1)=1=U_{3}(U^{*}(x,y),z)$ $=U_{3}(U_{3}(x,y),z) $ and $U_{3}(y,U_{3}(x,z))=U_{3}(y,1)=1$. Thus $U_{3}(x,U_{3}(y,z))$ $=U_{3}(U_{3}(x,y),z)=U_{3}(y,U_{3}(x,z))$.

6. If $x,y\in I_{a}^{e}$ and $z\in  I_{e,a}$, then $U_{3}(x,U_{3}(y,z))=U_{3}(x,1)=1=U_{3}(1,z)=U_{3}(U_{3}(x,y),z)$.

7. If $x,y\in I_{e,a}$ and $z\in (a,1]$, then $U_{3}(x,U_{3}(y,z))=U_{3}(x,1)=1=U_{3}(cl(x)\vee cl(y),z)=U_{3}(U_{3}(x,y),z)$.

8. If $x\in [0,e]$ and $y,z\in I_{a}^{e}\cup (a,1] $, then $U_{3}(x,U_{3}(y,z))=U_{3}(x,1)=1=U_{3}(y,z)=U_{3}(U_{3}(x,y),z)$ and $U_{3}(y,U_{3}(x,z))=U_{3}(y,z)=1$. Thus $U_{3}(x,U_{3}(y,z))$ $=U_{3}(U_{3}(x,y),z)=U_{3}(y,U_{3}(x,z))$.

9. If $x\in [0,e]$ and $y,z\in   I_{e,a}$, then $U_{3}(x,U_{3}(y,z))=U_{3}(x,cl(y)\vee cl(z))=cl(y)\vee cl(z)=U_{3}(y,z)= U_{3}(U_{3}(x,y),z)$.

10. If $x\in I_{e}^{a}\cup (e,a]$ and $y,z\in I_{a}^{e} \cup  (a,1] $, then $U_{3}(x,U_{3}(y,z))=U_{3}(x,1)=1=U_{3}(1,z)=U_{3}(U_{3}(x,y),z)$ and $U_{3}(y,U_{3}(x,z))=U_{3}(y,1)=1$. Thus $U_{3}(x,U_{3}(y,z))=U_{3}(U_{3}(x,y),z)=U_{3}(y,U_{3}(x,z))$.

11. If $x\in I_{e}^{a}\cup (e,a]$ and $y,z\in  I_{e,a}$, then $U_{3}(x,U_{3}(y,z))=U_{3}(x,cl(y)\vee cl(z))=1=U_{3}(1,z)=U_{3}(U_{3}(x,y),z)$.

12. If $x\in I_{a}^{e}$ and $y,z\in I_{e,a}$, then $U_{3}(x,U_{3}(y,z))=U_{3}(x,cl(y)\vee cl(z))=1=U_{3}(1,z)=U_{3}(U_{3}(x,y),z)$.

13. If $x\in I_{e,a}$ and $y,z\in (a,1]$, then $U_{3}(x,U_{3}(y,z))=U_{3}(x,1)=1=U_{3}(1,z)=U_{3}(U_{3}(x,y),z)$. Thus $U_{3}(x,U_{3}(y,z))=U_{3}(U_{3}(x,y),z)$.

14. If $x\in [0,e],y\in I_{e}^{a}\cup (e,a]$ and $z\in I_{a}^{e}\cup I_{e,a}\cup  (a,1] $, then $U_{3}(x,U_{3}(y,z))=U_{3}(x,1)=1=U_{3}(U^{*}(x,y),z)=U_{3}(U_{3}(x,y),z)$ and $U_{3}(y,U_{3}(x,z))=U_{3}(y,z)=1$. Thus $U_{3}(x,U_{3}(y,z))$ $=U_{3}(U_{3}(x,y),z)$ $=U_{3}(y,U_{3}(x,z))$.

15. If $x\in [0,e],y\in I_{a}^{e}$ and $z\in I_{e,a}$, then $U_{3}(x,U_{3}(y,z))=U_{3}(x,1)=1=U_{3}(y,z)=U_{3}(U_{3}(x,y),z)$ and $U_{3}(y,U_{3}(x,z))=U_{3}(y,z)=1$. Thus $U_{3}(x,U_{3}(y,z))=U_{3}(U_{3}(x,y),z)=U_{3}(y,U_{3}(x,z))$.

16. If $x\in [0,e],y\in I_{e,a}$ and $z\in (a,1]$, then $U_{3}(x,U_{3}(y,z))=U_{3}(x,1)=1=U_{3}(y,z)=U_{3}(U_{3}(x,y),z)$ and $U_{3}(y,U_{3}(x,z))=U_{3}(y,z)=1$. Thus $U_{3}(x,U_{3}(y,z))$ $=U_{3}(U_{3}(x,y),z)=U_{3}(y,U_{3}(x,z))$.

17. If $x\in I_{e}^{a}\cup (e,a],y\in I_{a}^{e}$ and $z\in I_{e,a}$, then $U_{3}(x,U_{3}(y,z))=U_{3}(x,1)=1=U_{3}(1,z)=U_{3}(U_{3}(x,y),z)$ and $U_{3}(y,U_{3}(x,z))=U_{3}(y,1)=1$. Thus $U_{3}(x,U_{3}(y,z))=U_{3}(U_{3}(x,y),z)=U_{3}(y,U_{3}(x,z))$.

18. If $x\in I_{e}^{a}\cup (e,a],y\in I_{e,a}$ and $z\in (a,1]$, then $U_{3}(x,U_{3}(y,z))=U_{3}(x,1)=1=U_{3}(1,z)=U_{3}(U_{3}(x,y),z)$ and $U_{3}(y,U_{3}(x,z))=U_{3}(y,1)=1$. Thus $U_{3}(x,U_{3}(y,z))=U_{3}(U_{3}(x,y),z)$ $=U_{3}(y,U_{3}(x,z))$.

19. If $x\in I_{a}^{e},y\in I_{e,a}$ and $z\in (a,1]$, then $U_{3}(x,U_{3}(y,z))=U_{3}(x,1)=1=U_{3}(1,z)=U_{3}(U_{3}(x,y),z)$ and $U_{3}(y,U_{3}(x,z))=U_{3}(y,1)=1$. Thus $U_{3}(x,U_{3}(y,z))=U_{3}(U_{3}(x,y),z)$ $=U_{3}(y,U_{3}(x,z))$.

$(1)\mathrm{(ii)}$  We just prove that If $I_{e,a}\cup I_{a}^{e}\cup (a,1)\neq\emptyset$, then the condition  $U^{*}\in\mathcal{U}_{\bot}^{*}$ is necessary.

It is obvious that if $(x,y)\in [0,e]^{2}$, then $U^{*}(x, y)\in[0,e]$. Thus, we only need to prove that if $U^{*}(x, y)\in[0,e]  $, then $(x,y)\in [0,e]^{2}$.
 The proof can be  split into all possible cases.

$a$. $U^{*}(x, y)\notin[0,e]  $ for all $(x,y)\in [0,e]\times (I_{e}^{a}\cup (e,a])\cup (I_{e}^{a}\cup (e,a])\times [0,e]\cup I_{e}^{a}\times I_{e}^{a}$.

Now we give the proof of $U^{*}(x, y)\notin[0,e]  $ for all $(x,y)\in [0,e]\times (I_{e}^{a}\cup (e,a])\cup I_{e}^{a}\times I_{e}^{a}$, and the other case is obvious by the commutativity  of $U^{*}$. Assume that   there exists $(x,y)\in [0,e]\times (I_{e}^{a}\cup (e,a])\cup I_{e}^{a}\times I_{e}^{a}$ such that  $U^{*}(x, y)\in[0,e]$.  If $z\in I_{e,a}\cup I_{a}^{e}\cup (a,1)$, then $U_{3}(x,U_{3}(y,z))=U_{3}(x,1)=1$ and $U_{3}(U_{3}(x,y),z)=U_{3}(U^{*}(x,y),z)=z$. Since $z<1$, the associativity of $U_{3}$ is violated. Thus $U^{*}(x, y)\notin[0,e]  $ for all $(x,y)\in [0,e)\times (I_{e}^{a}\cup (e,a])\cup (I_{e}^{a}\cup (e,a])\times [0,e)\cup I_{e}^{a}\times I_{e}^{a}$.

b. $U^{*}(x, y)\notin[0,e]  $ for all $(x,y)\in (e,a]^{2}\cup (e,a]\times I_{e}^{a}\cup I_{e}^{a}\times (e,a]$.

Now we just prove that  $U^{*}(x, y)\notin[0,e]  $ for all $(x,y)\in (e,a]^{2}\cup (e,a]\times I_{e}^{a}$, and the other case  is obvious by the commutativity  of $U^{*}$. By the increasingness  of $U^{*}$,  we can obtain that $y =U^{*}(e, y)\leq U^{*}(x, y)$. Since $y\in I_{e}^{a} \cup (e,a]$,  $U^{*}(x, y)\notin [0,e]$. Thus $U^{*}(x, y)\notin[0,e]  $ for all $(x,y)\in (e,a]^{2}\cup (e,a]\times I_{e}^{a}\cup I_{e}^{a}\times (e,a]$.

$(2)\mathrm{(i)}$ By the definition of $U_{3}$,   $U_{3}$ is commutative  and  $e$  is the neutral element of $U_{3}$. Thus, we only need to show the increasingness and the associativity of $U_{3}$.

I. Increasingness: We prove that if $x\leq y$, then $U_{3}(x,z)\leq U_{3}(y,z)$ for all $z\in L$.
Next, it is enough to check  the cases that are different from those in the proof of Theorem \ref{th33}$(1)\mathrm{(i)}$.

1. $x\in [0,e], y\in I_{e,a}$

\ \ \ 1.1. $z\in   I_{e,a} $

\ \ \ \ \ \ \ \ \ \ \ \ $U_{3}(x,z)=z< cl(y)\vee cl(z)=U_{3}(y,z)$

2. $x\in I_{e,a}$

\ \ \ 2.1. $y\in I_{e,a}$

\ \ \ \ \ \ 2.1.1. $z\in I_{e,a} $

\ \ \ \ \ \ \ \ \ \ \ \ $U_{3}(x,z)=cl(x)\vee cl(z) \leq cl(y)\vee cl(z)=U_{3}(y,z)$

\ \ \ 2.2. $y\in I_{a}^{e}\cup(a,1]$

\ \ \ \ \ \ 2.2.1. $z\in I_{e,a} $

\ \ \ \ \ \ \ \ \ \ \ \ $U_{3}(x,z)=cl(x)\vee cl(z)\leq 1=U_{3}(y,z)$

II. Associativity: It can be shown that $U_{3}(x,U_{3}(y,z))=U_{3}(U_{3}(x,y),z)$  for all $x,y,z\in L$. By Proposition  \ref{pro2.1} and the proof of Theorem \ref{th33}$(1)(i)$, we just  check the cases that are different from the cases in the proof of Theorem \ref{th33}$(1)\mathrm{(i)}$.

1. If $x,y,z\in  I_{e,a} $, then $U_{3}(x,U_{3}(y,z))=U_{3}(x,cl(y)\vee cl(z))=1=U_{3}(cl(x)\vee cl(y),z)=U_{3}(U_{3}(x,y),z)$.

2. If $x\in [0,e]$ and $y,z\in   I_{e,a}$, then $U_{3}(x,U_{3}(y,z))=U_{3}(x,cl(y)\vee cl(z))=cl(y)\vee cl(z)=U_{3}(y,z)=U_{3}(U_{3}(x,y),z) $.

3. If $x\in I_{e}^{a}\cup (e,a]\cup I_{a}^{e}$ and $y,z\in  I_{e,a}$, then $U_{3}(x,U_{3}(y,z))=U_{3}(x,cl(y)\vee cl(z))=1=U_{3}(1,z)=U_{3}(U_{3}(x,y),z)$.

4. If $x,y\in I_{e,a}$ and $z\in (a,1]$, then $U_{3}(x,U_{3}(y,z))=U_{3}(x,1)=1=U_{3}(cl(x)\vee cl(y),z)=U_{3}(U_{3}(x,y),z)$.

$(2)\mathrm{(ii)}$ In the following, we only prove that if $cl(x)\vee cl(y)\in(a,1)$ for all $x,y\in I_{e,a} $ and $I_{e,a}\cup I_{a}^{e}\cup (a,1)\neq\emptyset$, then  $x\parallel y$ for all $x\in I_{e,a}, y\in I_{e}^{a} $ and $U^{*}\in\mathcal{U}_{\bot}^{*}$ are necessary.
First,  assume that there exist $x\in I_{e,a}$ and $y\in I_{e}^{a} $ such that $x\nparallel y$, i.e., $y<x$. Then $U_{3}(x,x)=cl(x)\vee cl(x)$ and $U_{3}(x,y)=1$. Since $cl(x)\vee cl(x)<1$, the increasingness of $U_{3}$ is violated. Thus $x\parallel y$ for all $x\in I_{e,a}$ and $y\in I_{e}^{a}$.
Then we can obtain that $U^{*}\in\mathcal{U}_{\bot}^{*}$ is necessary  by the proof similar to Theorem \ref{th33}$(1)\mathrm{(ii)}$.
\end{proof}

\begin{remark}
If taking $e=0$ in Theorem \ref{th33}, then $[e,a]=[0,a]$, $I_{a}^{e}=I_{a}$, $I_{e,a}\cup I_{e}^{a}\cup [0,e)=\emptyset$ and $U^{*}$ is a $t$-conorm on $[0,a]$. Moreover, based on the  above  case,  the following conditions naturally hold.\\
$(1)$ $cl(x)\vee cl(y)\in I_{e,a}$ or $cl(x)\vee cl(y)\in (a,1]$ for all $x,y\in I_{e,a} $. \\
$(2)$ $U^{*}\in\mathcal{U}_{\bot}^{*}$ and $x\parallel y$ for all $x\in I_{e,a}$ and $y\in I_{e}^{a} $.

By the above fact, if taking $e=0$ in Theorem \ref{th33}, then we retrieve the  $t$-conorm $S_{2}^{*}: L^{2}\rightarrow L$ constructed by \c{C}ayl{\i} (\cite{GD018}, Theorem 1) as follow.

$S_{2}^{*}(x,y)=\begin{cases}
S_{e}(x, y) &\mbox{if } (x,y)\in [0,a]^{2},\\
x &\mbox{if } (x,y)\in (I_{a}\cup(a,1])\times \{0\},\\
y &\mbox{if } (x,y)\in \{0\}\times (I_{a}\cup(a,1]),\\
1 &\mbox{}otherwise.\\
\end{cases}$
\end{remark}

\begin{remark}
If taking $e=a$ in Theorem \ref{th33}, then $[0,a]=[0,e]$, $I_{e,a}=I_{e}$, $I_{a}^{e}\cup I_{e}^{a}\cup (e,a]=\emptyset$ and $U^{*}$ is a $t$-norm on $[0,a]$. Moreover, in this case,
$U^{*}\in\mathcal{U}_{\bot}^{*}$ and $x\parallel y$ for all $x\in I_{e,a}$ and $y\in I_{e}^{a} $ naturally hold.

By the above fact, if taking $e=a$ in Theorem \ref{th33}, then we retrieve the   uninorm $U_{cl}: L^{2}\rightarrow L$ constructed by Zhao and Wu (\cite{BZ21}, Proposition 3.1) as follow.

$U_{cl}(x,y)=\begin{cases}
T_{e}(x, y) &\mbox{if } (x,y)\in [0,e]^{2},\\
x &\mbox{if } (x,y)\in I_{e}\times [0,e]\cup(e,1]\times[0,e],\\
y &\mbox{if } (x,y)\in [0,e]\times I_{e}\cup [0,e]\times(e,1],\\
cl(x)\vee cl(y) &\mbox{if } (x,y)\in I_{e}\times I_{e},\\
1 &\mbox{}otherwise.\\
\end{cases}$
\end{remark}

If we put $cl(x)=x$ for $x\in L$ in Theorem \ref{th33}, then we obtain the following corollary.

\begin{corollary}\label{coro331}
Let  $a\in L\setminus\{0,1\}$ and $U^{*}$ be a uninorm on $[0,a]$ with a neutral element $e$. Let $U_{31}(x,y):L^{2}\rightarrow L$ be a  function defined as follow:

$U_{31}(x,y)=\begin{cases}
U^{*}(x, y) &\mbox{if } (x,y)\in [0,a]^{2},\\
x &\mbox{if } (x,y)\in (L\setminus[0,a])\times [0,e],\\
y &\mbox{if } (x,y)\in [0,e]\times (L\setminus[0,a]),\\
x\vee y &\mbox{if } (x,y)\in I_{e,a}\times I_{e,a},\\
1 &\mbox{}otherwise.\\
\end{cases}$

$(1)$ Suppose that $x\vee y\in I_{e,a}$ for all $x,y\in I_{e,a} $.

$\mathrm{(i)}$    If  $U^{*}\in\mathcal{U}_{\bot}^{*}$, then $U_{31}$ is a uninorm   with the neutral element $e\in L $ if and only if $x\parallel y$ for all $x\in I_{e,a}$ and $y\in I_{e}^{a} $.

$\mathrm{(ii)}$ If  $I_{e,a}\cup I_{a}^{e}\cup (a,1)\neq\emptyset$, then $U_{31}$ is a uninorm   with the neutral element $e\in L $ if and only if  $U^{*}\in\mathcal{U}_{\bot}^{*}$ and $x\parallel y$ for all $x\in I_{e,a}$ and $y\in I_{e}^{a} $.

$(2)$  Suppose that $x\vee y\in (a,1]$ for all $x,y\in I_{e,a} $.

$\mathrm{(i)}$ If $x\parallel y$ for all $x\in I_{e,a}, y\in I_{e}^{a} $ and $U^{*}\in\mathcal{U}_{\bot}^{*}$, then $U_{31}$ is a uninorm  with the neutral element $e\in L $.

$\mathrm{(ii)}$  If $x\vee y<1$ for all $x,y\in I_{e,a}$ and $I_{e,a}\cup I_{a}^{e}\cup (a,1)\neq\emptyset$,  then $U_{31}$ is a uninorm   with the neutral element $e\in L $  if and only if $x\parallel y$ for all $x\in I_{e,a}, y\in I_{e}^{a} $ and $U^{*}\in\mathcal{U}_{\bot}^{*}$.
\end{corollary}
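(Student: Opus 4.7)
The approach I would take is to recognize Corollary \ref{coro331} as a direct specialization of Theorem \ref{th33} via the substitution $cl(x)=x$ for all $x\in L$. The first step is to verify that the identity mapping $\mathrm{id}: L\to L$, $x\mapsto x$, is indeed a closure operator on $L$: all three defining conditions hold trivially, namely $x\leq \mathrm{id}(x)=x$; $\mathrm{id}(x\vee y)=x\vee y=\mathrm{id}(x)\vee \mathrm{id}(y)$; and $\mathrm{id}(\mathrm{id}(x))=x=\mathrm{id}(x)$. Hence $\mathrm{id}$ is admissible as a choice of $cl$ in Theorem \ref{th33}.

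Next, I would substitute $cl=\mathrm{id}$ into the definition of $U_{3}$ appearing in Theorem \ref{th33}. On the piece $(x,y)\in I_{e,a}\times I_{e,a}$ the value $cl(x)\vee cl(y)$ becomes $x\vee y$, while the remaining four pieces of the piecewise definition of $U_{3}$ do not involve $cl$ at all. Thus under this specialization $U_{3}$ coincides exactly with $U_{31}$. Correspondingly, the auxiliary hypotheses in Theorem \ref{th33} translate verbatim: ``$cl(x)\vee cl(y)\in I_{e,a}$'' becomes ``$x\vee y\in I_{e,a}$'', ``$cl(x)\vee cl(y)\in (a,1]$'' becomes ``$x\vee y\in (a,1]$'', and ``$cl(x)\vee cl(y)<1$'' becomes ``$x\vee y<1$''. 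The remaining hypotheses, namely $U^{*}\in\mathcal{U}_{\bot}^{*}$, the incomparability condition $x\parallel y$ for all $x\in I_{e,a}$ and $y\in I_{e}^{a}$, and $I_{e,a}\cup I_{a}^{e}\cup(a,1)\neq\emptyset$, are independent of the closure operator and carry over unchanged.

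With the dictionary above in hand, each of the four statements $(1)\mathrm{(i)}$, $(1)\mathrm{(ii)}$, $(2)\mathrm{(i)}$, $(2)\mathrm{(ii)}$ of Corollary \ref{coro331} follows at once from the corresponding statement of Theorem \ref{th33} applied with $cl=\mathrm{id}$. There is no genuine obstacle here, since Theorem \ref{th33} has been established in full generality; the only care required is bookkeeping, namely checking that the substitution $cl=\mathrm{id}$ converts the hypotheses and the piecewise formula of Theorem \ref{th33} exactly into those of Corollary \ref{coro331}, which it does on inspection.
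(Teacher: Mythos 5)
Your proposal is correct and matches the paper exactly: the paper obtains Corollary \ref{coro331} precisely by setting $cl(x)=x$ in Theorem \ref{th33}, and your verification that the identity map is a closure operator together with the substitution bookkeeping is all that is required.
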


\begin{remark}

If taking $e=a$ in Corollary \ref{coro331}, then we retrieve the  uninorm $U_{t_{2}}: L^{2}\rightarrow L$ constructed by \c{C}ayl{\i} and Kara\c{c}al (\cite{GD17}, Theorem 3.1).

$U_{t_{2}}(x,y)=\begin{cases}
T_{e}(x, y) &\mbox{if } (x,y)\in [0,e]^{2},\\
x &\mbox{if } (x,y)\in I_{e}\times [0,e]\cup(e,1]\times[0,e],\\
y &\mbox{if } (x,y)\in [0,e]\times I_{e }\cup [0,e]\times(e,1],\\
x\vee y &\mbox{if } (x,y)\in I_{e}\times I_{e},\\
1 &\mbox{}otherwise.\\
\end{cases}$
\end{remark}

If we put $cl(x)=x\vee a$ for $x\in L$ in Theorem \ref{th33}, then we obtain the following corollary based on Theorem \ref{th33}$(2)$.

\begin{corollary}\label{coro332}
Let $U^{*}$ be a uninorm on $[0,a]$ with a neutral element $e$ for $a\in L\setminus\{0,1\}$.  Let $U_{32}(x,y):L^{2}\rightarrow L$ be a  function defined as follows:

$U_{32}(x,y)=\begin{cases}
U^{*}(x, y) &\mbox{if } (x,y)\in [0,a]^{2},\\
x &\mbox{if } (x,y)\in (L\setminus[0,a])\times [0,e],\\
y &\mbox{if } (x,y)\in [0,e]\times (L\setminus[0,a]),\\
x\vee y\vee a &\mbox{if } (x,y)\in I_{e,a}\times I_{e,a},\\
1 &\mbox{}otherwise.\\
\end{cases}$

$(1)$ If $x\parallel y$ for all $x\in I_{e,a}, y\in I_{e}^{a} $ and $U^{*}\in\mathcal{U}_{\bot}^{*}$, then $U_{32}$ is a uninorm  with the neutral element $e\in L $.

$(2)$  If  $I_{e,a}\cup I_{a}^{e}\cup (a,1)\neq\emptyset$ and $x\vee y\vee a<1$ for all $x,y\in I_{e,a} $, then $U_{32}$ is a uninorm   with the neutral element $e\in L $ if and only if  $x\parallel y$ for all $x\in I_{e,a}, y\in I_{e}^{a} $ and $U^{*}\in\mathcal{U}_{\bot}^{*}$.
\end{corollary}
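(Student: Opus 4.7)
The plan is to derive Corollary \ref{coro332} directly from Theorem \ref{th33}$(2)$ by specializing the closure operator. First I would define the map $cl:L\to L$ by $cl(x)=x\vee a$ and verify that it is indeed a closure operator on $L$: condition (1) is immediate since $x\leq x\vee a$; condition (2) follows from associativity and idempotency of $\vee$, because $cl(x\vee y)=x\vee y\vee a=(x\vee a)\vee(y\vee a)=cl(x)\vee cl(y)$; condition (3) reduces to $cl(x\vee a)=x\vee a\vee a=x\vee a=cl(x)$ by idempotency of $\vee$. With this choice of $cl$, the $I_{e,a}\times I_{e,a}$ branch of $U_{3}$ becomes $cl(x)\vee cl(y)=x\vee y\vee a$, which matches exactly the corresponding branch of $U_{32}$, while the other branches of the definition of $U_{3}$ in Theorem \ref{th33} coincide with those of $U_{32}$ verbatim. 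Hence $U_{32}=U_{3}$ for this specific $cl$.

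Next I would check that the hypothesis $cl(x)\vee cl(y)\in (a,1]$ required by Theorem \ref{th33}$(2)$ is automatic here. For any $x,y\in I_{e,a}$, the condition $x\parallel a$ gives $a<x\vee a$, and therefore $cl(x)\vee cl(y)=x\vee y\vee a\geq x\vee a>a$. Since trivially $x\vee y\vee a\leq 1$, we have $cl(x)\vee cl(y)\in (a,1]$ for all $x,y\in I_{e,a}$, as demanded.

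With these two preparations in hand, part (1) of the corollary is an immediate application of Theorem \ref{th33}$(2)\mathrm{(i)}$: assuming $x\parallel y$ for all $x\in I_{e,a}$, $y\in I_{e}^{a}$ and $U^{*}\in\mathcal{U}_{\bot}^{*}$, the theorem guarantees that $U_{3}=U_{32}$ is a uninorm with neutral element $e$. For part (2), I would invoke Theorem \ref{th33}$(2)\mathrm{(ii)}$: the additional assumption $x\vee y\vee a<1$ for all $x,y\in I_{e,a}$ is precisely $cl(x)\vee cl(y)<1$ under our choice of $cl$, so together with $I_{e,a}\cup I_{a}^{e}\cup (a,1)\neq\emptyset$ the theorem yields the stated equivalence.

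There is essentially no hard obstacle, since the proof is a direct specialization of Theorem \ref{th33}; the only verifications are the closure-operator axioms for $x\mapsto x\vee a$ and the containment $cl(x)\vee cl(y)\in (a,1]$ on $I_{e,a}\times I_{e,a}$. The step most worth highlighting is the latter, because it is what justifies applying Theorem \ref{th33}$(2)$ rather than Theorem \ref{th33}$(1)$, and it is genuinely used: without $x\parallel a$ one could not guarantee strict inequality $a<x\vee a$, so the proof would collapse. Everything else follows by pure substitution.
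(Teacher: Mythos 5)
Your proposal is correct and coincides with the paper's own route: the paper obtains Corollary \ref{coro332} precisely by substituting $cl(x)=x\vee a$ into Theorem \ref{th33}$(2)$, and your verification that this map is a closure operator and that $x\parallel a$ forces $cl(x)\vee cl(y)=x\vee y\vee a>a$ (so the hypothesis of part $(2)$ of the theorem holds automatically) supplies exactly the details the paper leaves implicit.
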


\begin{remark}
If taking $e=a$ in Corollary \ref{coro332}, then we retrieve the  uninorm $U_{t_{3}}: L^{2}\rightarrow L$ constructed by \c{C}ayl{\i} (\cite{GD018b}, Theorem 2.23).

$U_{t_{3}}(x,y)=\begin{cases}
T_{e}(x, y) &\mbox{if } (x,y)\in [0,e]^{2},\\
x &\mbox{if } (x,y)\in (L\setminus [0,e])\times [0,e],\\
y &\mbox{if } (x,y)\in [0,e]\times (L\setminus [0,e]),\\
x\vee y\vee e &\mbox{if } (x,y)\in I_{e}\times I_{e},\\
1 &\mbox{}otherwise.\\
\end{cases}$
\end{remark}

\begin{example}
Given a bounded lattice $L_{3}$ depicted in Fig.3., a uninorm $U^{*}$ on $[0,a]$ shown in Table \ref{Tab:01} and a closure operator $cl$ on $L_{3}$ defined by $cl(x)=x$ for all $x\in L_{3}$.  It is easy to see that $L_{3}$, $U^{*}$ and $cl$ satisfy the conditions in Theorem \ref{th33}$(1)\mathrm{(i)}$ on $L_{3}$. By  Theorem \ref{th33}, we can obtain a uninorm $U_{3}$ on $L_{4}$ with the neutral element $e$, shown in Table \ref{Tab:02}.
\end{example}

\begin{minipage}{11pc}
\setlength{\unitlength}{0.75pt}\begin{picture}(600,160)
\put(270,20){\circle{2}}\put(267,12){\makebox(0,0)[l]{\footnotesize$0$}}
\put(270,40){\circle{2}}\put(260,40){\makebox(0,0)[l]{\footnotesize$b$}}
\put(270,60){\circle{2}}\put(255,60){\makebox(0,0)[l]{\footnotesize$e$}}
\put(270,80){\circle{2}}\put(260,81){\makebox(0,0)[l]{\footnotesize$c$}}
\put(270,100){\circle{2}}\put(260,101){\makebox(0,0)[l]{\footnotesize$a$}}
\put(270,120){\circle{2}}\put(275,123){\makebox(0,0)[l]{\footnotesize$d$}}
\put(270,140){\circle{2}}\put(267,148){\makebox(0,0)[l]{\footnotesize$1$}}
\put(231,101){\circle{2}}\put(222,103){\makebox(0,0)[l]{\footnotesize$l$}}
\put(250,81){\circle{2}}\put(237,83){\makebox(0,0)[l]{\footnotesize$n$}}
\put(211,81){\circle{2}}\put(200,81){\makebox(0,0)[l]{\footnotesize$t$}}
\put(231,61){\circle{2}}\put(216,59){\makebox(0,0)[l]{\footnotesize$m$}}
\put(290,100){\circle{2}}\put(295,100){\makebox(0,0)[l]{\footnotesize$s$}}
\put(290,60){\circle{2}}\put(295,60){\makebox(0,0)[l]{\footnotesize$k$}}

\put(270,21){\line(0,1){18}}
\put(270,21){\line(-1,1){39}}
\put(270,41){\line(0,1){18}}
\put(230,61){\line(-1,1){19}}
\put(270,61){\line(0,1){18}}
\put(250,82){\line(-1,1){18}}
\put(270,81){\line(0,1){18}}
\put(270,81){\line(1,1){19}}
\put(211,82){\line(1,1){19}}
\put(230,62){\line(1,1){19}}
\put(270,101){\line(0,1){18}}
\put(290,101){\line(-1,1){19}}
\put(270,121){\line(0,1){18}}
\put(270,41){\line(1,1){19}}
\put(290,61){\line(-1,1){19}}
\put(231,102){\line(2,1){38}}

\put(200,-10){\emph{Fig.3. The lattice $L_{3}$.}}
\end{picture}
\end{minipage}

\begin{table}[htbp]
\centering
\caption{$U^{*}$ on $[0,a]$.}
\label{Tab:01}\

\begin{tabular}{c c c c c c c c c}
\hline
  $U^{*}$ & $0$ & b & $e$ & $k$ & $c$ & $a$ \\
\hline
  $0$ & $0$ & $0$ & $0$ & $k$ & $c$ & $a$ \\

  b & $0$ & b & b & $k$ & $c$ & $a$ \\

  $e$ & $0$ & b & $e$ & $k$ & $c$ & $a$ \\

  $k$ & $k$ & $k$ & $k$ & $k$ & $c$ & $a$ \\

  $c$ & $c$ & $c$ & $c$ & $c$ & $c$ & $a$ \\

  $a$ & $a$ & $a$ & $a$ & $a$ & $a$ & $a$ \\
\hline
\end{tabular}
\end{table}

\begin{table}[htbp]
\centering
\caption{$U_{3} $ on $L_{3}$.}
\label{Tab:02}\

\begin{tabular}{c c c c c c c c c c c c c c}
\hline
  $U_{3}$ & $0$ & $b$ & $e$ & $k$ & $c$ & $a$ & $m$ & $t$ & $n$ & $l$ & $s$ & $d$ & $1$ \\
\hline
  $0$ & $0$ & $0$ & $0$ & $k$ & $c$ & $a$ & $m$ & $t$ & $n$ & $l$ & $s$ & $d$ & $1$ \\

  $b$ & $0$ & $b$ & $b$ & $k$ & $c$ & $a$ & $m$ & $t$ & $n$ & $l$ & $s$ & $d$ & $1$ \\

  $e$ & $0$ & $b$ & $e$ & $k$ & $c$ & $a$ & $m$ & $t$ & $n$ & $l$ & $s$ & $d$ & $1$ \\

  $k$ & $k$ & $k$ & $k$ & $k$ & $c$ & $a$ & $1$ & $1$ & $1$ & $1$ & $1$ & $1$ & $1$ \\

  $c$ & $c$ & $c$ & $c$ & $c$ & $c$ & $a$ & $1$ & $1$ & $1$ & $1$ & $1$ & $1$ & $1$ \\

  $a$ & $a$ & $a$ & $a$ & $a$ & $a$ & $a$ & $1$ & $1$ & $1$ & $1$ & $1$ & $1$ & $1$ \\

  $m$ & $m$ & $m$ & $m$ & $1$ & $1$ & $1$ & $m$ & $t$ & $n$ & $l$ & $1$ & $1$ & $1$ \\

  $t$ & $t$ & $t$ & $t$ & $1$ & $1$ & $1$ & $t$ & $t$ & $l$ & $l$ & $1$ & $1$ & $1$ \\

  $n$ & $n$ & $n$ & $n$ & $1$ & $1$ & $1$ & $n$ & $l$ & $n$ & $l$ & $1$ & $1$ & $1$ \\

  $l$ & $l$ & $l$ & $l$ & $1$ & $1$ & $1$ & $l$ & $l$ & $l$ & $l$ & $1$ & $1$ & $1$ \\

  $s$ & $s$ & $s$ & $s$ & $1$ & $1$ & $1$ & $1$ & $1$ & $1$ & $1$ & $1$ & $1$ & $1$ \\

  $d$ & $d$ & $d$ & $d$ & $1$ & $1$ & $1$ & $1$ & $1$ & $1$ & $1$ & $1$ & $1$ & $1$ \\

  $1$ & $1$ & $1$ & $1$ & $1$ & $1$ & $1$ & $1$ & $1$ & $1$ & $1$ & $1$ & $1$ & $1$ \\
\hline
\end{tabular}
\end{table}

\begin{remark}\label{}
$(1)$ In Theorem \ref{th33}$(1)$, the condition $cl(x)\vee cl(y)\in I_{e,a}$ for all $x,y\in I_{e,a} $ can not be omitted,  in general. The next example shows that the associativity of $U_{3}$ can be violated.\\
$(2)$  Similarly, in Theorem \ref{th33}$(2)$,  the condition $cl(x)\vee cl(y)\in (a,1]$ for all $x,y\in I_{e,a} $ can not be omitted.
\end{remark}

\begin{example}\label{}
Given a bounded lattice $L_{4}$ depicted in Fig.4., a uninorm $U^{*}$ on $[0,a]$ shown in Table \ref{Tab:03} and a closure operator $cl$ on $L_{4}$ defined by $cl(m)=b$ for $m\in L_{4}$ and $cl(x)=x$ for all $x\in L_{4}\setminus\{m\}$.  It is easy to see that $U^{*}\in \mathcal{U}_{\bot}^{*}$ and  $cl$ does not satisfy the condition in Theorem \ref{th33}$(1)$, i.e., $cl(m)\vee cl(k)=b\vee k=b\notin I_{e,a}$ for $m,k\in I_{e,a}$. By  Theorem \ref{th33}, we can obtain a function $U_{3}$ on $L_{4}$, shown in Table \ref{Tab:04}. Since $U_{3}(k,U_{3}(k,m))=U_{3}(k,b)=1$ and $U_{3}(U_{3}(k,k),m)=U_{3}(k,m)=b$ for  $k,m\in L_{4}$, the function $U_{3}$ does not satisfy associativity. Thus, the function $U_{3}$ is not a uninorm on $L_{4}$.
\end{example}\

\begin{minipage}{11pc}
\setlength{\unitlength}{0.75pt}\begin{picture}(600,100)
\put(270,20){\circle{2}}\put(267,12){\makebox(0,0)[l]{\footnotesize$0$}}
\put(270,40){\circle{2}}\put(273,42){\makebox(0,0)[l]{\footnotesize$e$}}
\put(270,60){\circle{2}}\put(275,60){\makebox(0,0)[l]{\footnotesize$a$}}
\put(270,80){\circle{2}}\put(275,82){\makebox(0,0)[l]{\footnotesize$b$}}
\put(270,100){\circle{2}}\put(267,108){\makebox(0,0)[l]{\footnotesize$1$}}
\put(251,61){\circle{2}}\put(237,62){\makebox(0,0)[l]{\footnotesize$m$}}
\put(251,41){\circle{2}}\put(237,42){\makebox(0,0)[l]{\footnotesize$k$}}

\put(270,21){\line(0,1){18}}
\put(270,41){\line(0,1){18}}
\put(270,61){\line(0,1){18}}
\put(251,42){\line(0,1){18}}
\put(270,21){\line(-1,1){19}}
\put(270,81){\line(0,1){18}}
\put(251,62){\line(1,1){19}}

\put(200,-10){\emph{Fig.4. The lattice $L_{4}$.}}
\end{picture}
\end{minipage}

\begin{table}[htbp]
\centering
\caption{$U^{*}$ on $[0,a]$.}
\label{Tab:03}\

\begin{tabular}{c c c c c c c c}
\hline
  $U^{*}$ & $0$ & $e$ & $a$ \\
\hline
  $0$ & $0$ & $0$ & $a$ \\

  $e$ & $0$ & $e$ & $a$ \\

  $a$ & $a$ & $a$ & $a$ \\
\hline
\end{tabular}
\end{table}

\begin{table}[htbp]
\centering
\caption{$U_{3} $ on $L_{4}$.}
\label{Tab:04}\

\begin{tabular}{c c c c c c c c c}
\hline
  $U_{3} $ & $0$ & $e$ & $a$ & $k$ & $m$ & $b$ & $1$ \\
\hline
  $0$ & $0$ & $0$ & $a$ & $k$ & $m$ & $b$ & $1$ \\

  $e$ & $0$ & $e$ & $a$ & $k$ & $m$ & $b$ & $1$ \\

  $a$ & $a$ & $a$ & $a$ & $1$ & $1$ & $1$ & $1$ \\

  $k$ & $k$ & $k$ & $1$ & $k$ & $b$ & $1$ & $1$ \\

  $m$ & $m$ & $m$ & $1$ & $b$ & $b$ & $1$ & $1$ \\

  $b$ & $b$ & $b$ & $1$ & $1$ & $1$ & $1$ & $1$ \\

  $1$ & $1$ & $1$ & $1$ & $1$ & $1$ & $1$ & $1$ \\
\hline
\end{tabular}
\end{table}

\begin{remark}\label{}
Let $U_{3}$  be a uninorm in Theorem \ref{th33}.\\
$(1)$  $U_{3}$ is disjunctive, i.e., $U_{3}(0,1)=1$.\\
$(2)$ If $a=1$, then $U_{3}=U^{*}$.\\
$(3)$  $U_{3}$ is not idempotent, in general. In fact, if $(a,1)\neq \emptyset$,  then   there exists $x$ such that $x\in (a,1)$ and then $U_{3}(x,x)=1\neq x$.
This is a contradiction with that  $U_{3}$ be idempotent.\\
$(4)$  $U_{3}\in \mathcal{U}_{max}^{*}$ if and only if  $U^{*}\in \mathcal{U}_{max}^{*}$.
\end{remark}

\begin{theorem}\label{th34}
Let    $U^{*}$ be a uninorm on $[b,1]$ with a neutral element $e$  for $b\in L\setminus\{0,1\}$ and  $int$ be an interior operator.  Let $U_{4}(x,y):L^{2}\rightarrow L$ be a function defined  as follows

$U_{4}(x,y)=\begin{cases}
U^{*}(x, y) &\mbox{if } (x,y)\in [b,1]^{2},\\
x &\mbox{if } (x,y)\in (L\setminus[b,1])\times [e,1],\\
y &\mbox{if } (x,y)\in [e,1]\times (L\setminus[b,1]),\\
int(x)\wedge int(y) &\mbox{if } (x,y)\in I_{e,b}\times I_{e,b},\\
0 &\mbox{}otherwise.\\
\end{cases}$

$(1)$ Suppose that $int(x)\wedge int(y)\in I_{e,b}$ for all $x,y\in I_{e,b}$.

$\mathrm{(i)}$ If $U^{*}\in \mathcal{U}_{\top}^{*}$, then  $U_{4}$ is a uninorm with the neutral element $e\in L$ if and only if $x\parallel y$ for all $x\in I_{e,b}$ and $y\in I_{e}^{b} $.

$\mathrm{(ii)}$ If $I_{e,b}\cup I_{b}^{e}\cup (0,b)\neq \emptyset$, then $U_{4}$ is a uninorm  with the neutral element $e\in L$  if and only if  $U^{*}\in \mathcal{U}_{\top}^{*}$ and $x\parallel y$ for all $x\in I_{e,b}$ and $y\in I_{e}^{b} $.

$(2)$ Suppose that $int(x)\wedge int(y)\in [0,b)$ for all $x,y\in I_{e,b}$.

$\mathrm{(i)}$  If $x\parallel y$ for all $x\in I_{e,b}$ and $ y\in I_{e}^{b} $ and $U^{*}\in \mathcal{U}_{\top}^{*}$, then $U_{4}$ is a uninorm   with the neutral element $e\in L$.

$\mathrm{(ii)}$ If  $I_{e,b}\cup I_{b}^{e}\cup (0,b)\neq \emptyset$ and $0<int(x)\wedge int(y) $ for all $x,y\in I_{e,b}$, then $U_{4}$ is a uninorm on $L$ with the neutral element $e\in L$  if and only if $x\parallel y$ for all $x\in I_{e,b}, y\in I_{e}^{b} $ and $U^{*}\in \mathcal{U}_{\top}^{*}$.
\end{theorem}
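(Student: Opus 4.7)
The plan is to carry out the same scheme used in the proof of Theorem~\ref{th33}, exploiting the order-reversing duality between the two statements via the dictionary $[0,a] \leftrightarrow [b,1]$, $cl \leftrightarrow int$, $\vee \leftrightarrow \wedge$, $1 \leftrightarrow 0$, and $\mathcal{U}_{\bot}^{*} \leftrightarrow \mathcal{U}_{\top}^{*}$, together with the block replacements $[0,e] \leftrightarrow [e,1]$, $I_e^a \leftrightarrow I_e^b$, $(e,a] \leftrightarrow [b,e)$, $I_a^e \leftrightarrow I_b^e$, $I_{e,a} \leftrightarrow I_{e,b}$, $(a,1] \leftrightarrow [0,b)$. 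Under this dictionary the formula defining $U_4$ is precisely the order-reversed analogue of $U_3$, so every step of the earlier proof transposes to a corresponding step here.

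For the sufficiency parts (1)(i), (2)(i) and (2)(ii), I first observe that commutativity and the neutral element property hold directly from the definition of $U_4$, which reduces the task to monotonicity and associativity. Monotonicity I verify by case analysis on the partition $L = [0,b) \cup I_b^e \cup [b,e) \cup I_{e,b} \cup I_e^b \cup [e,1]$, mirroring the five-case enumeration from the proof of Theorem~\ref{th33}(1)(i). Associativity I check via Proposition~\ref{pro2.1} on the corresponding list of block-triples; the hypothesis $U^{*} \in \mathcal{U}_{\top}^{*}$ plays precisely the role previously played by $\mathcal{U}_{\bot}^{*}$, ensuring that the $[b,1]^2$ block of $U^{*}$ is compatible with the ``$U_4 = 0$ otherwise'' branch.

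For the necessity parts, I argue by contradiction. To force $x \parallel y$ for $x \in I_{e,b}$ and $y \in I_e^b$: if $x \nparallel y$, then, since $y > b$ (because $y \parallel e$ together with $b \leq e$ rules out $y \leq b$) and $x \parallel b$, necessarily $x < y$; then $U_4(x,x) = int(x) \in I_{e,b}$, hence $> 0$, while $U_4(x,y) = 0$, violating monotonicity $U_4(x,x) \leq U_4(x,y)$. To force $U^{*} \in \mathcal{U}_{\top}^{*}$ in (1)(ii) and (2)(ii), under the hypothesis $I_{e,b} \cup I_b^e \cup (0,b) \neq \emptyset$: if $U^{*}(x,y) \in [e,1]$ for some $(x,y) \in [b,1]^2 \setminus [e,1]^2$, pick $z$ from that nonempty set and compare $U_4(z, U^{*}(x,y)) = z$ against $U_4(U_4(z,x), y) = U_4(0, y) = 0$, contradicting associativity; the remaining subcase where $y \in [b,e) \cup I_e^b$ is ruled out by $U^{*}(x,y) \leq U^{*}(e, y) = y \notin [e,1]$, the dual of the monotonicity bound used in Theorem~\ref{th33}(1)(ii).

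The principal obstacle is bookkeeping rather than novelty: monotonicity and associativity each branch into many subcases, and in part (2) the hypothesis $int(x) \wedge int(y) \in [0,b)$ changes the value produced on $I_{e,b}^2$, so a handful of associativity triples involving $I_{e,b}$ (those in which a single step sends a pair into $[0,b)$) have to be reworked exactly as in the proof of Theorem~\ref{th33}(2)(i) rather than copied verbatim from (1)(i). Once those boundary triples are handled, the rest of the argument is a direct transposition of the proof of Theorem~\ref{th33}.
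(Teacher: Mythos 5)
Your proposal is correct and follows exactly the route the paper intends: the paper's own proof of Theorem \ref{th34} is literally the one-line remark that it is the order-dual of Theorem \ref{th33}, and your dictionary ($[0,a]\leftrightarrow[b,1]$, $cl\leftrightarrow int$, $\vee\leftrightarrow\wedge$, $1\leftrightarrow 0$, $\mathcal{U}_{\bot}^{*}\leftrightarrow\mathcal{U}_{\top}^{*}$) together with the transposed necessity arguments (monotonicity violation for $x\parallel y$, associativity violation plus the bound $U^{*}(x,y)\leq U^{*}(e,y)=y$ for $U^{*}\in\mathcal{U}_{\top}^{*}$) reproduces that dualization faithfully, including the reworked $I_{e,b}$ triples in part (2).
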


\begin{proof}
It can be proved immediately by the proof similar to Theorem \ref{th33}.
\end{proof}

\begin{remark}
If taking $e=1$ in Theorem \ref{th34}, then $[b,e]=[b,1]$, $I_{b}^{e}=I_{b}$, $I_{e,b}\cup I_{e}^{b}\cup (e,1]=\emptyset$ and $U^{*}$ is a $t$-norm on $[b,1]$. Moreover, in this case,
 the following conditions  naturally hold.\\
$(1)$  $int(x)\wedge int(y)\in I_{e,b}$ or $int(x)\wedge int(y)\in [0,b)$ for all $x,y\in I_{e,b}$. \\
$(2)$  $U^{*}\in\mathcal{U}_{\top}^{*}$ and $x\parallel y$ for all $x\in I_{e,b}$ and $y\in I_{e}^{b}$.


By the above fact, if taking $e=1$ in Theorem \ref{th34}, then we retrieve the following $t$-norm $T_{2}^{*}: L^{2}\rightarrow L$ constructed by \c{C}ayl{\i} (\cite{GD018}, Theorem 1).

$T_{2}^{*}(x,y)=\begin{cases}
T_{e}(x, y) &\mbox{if } (x,y)\in [b,1]^{2},\\
x &\mbox{if } (x,y)\in (I_{b}\cup[0,b))\times \{1\},\\
y &\mbox{if } (x,y)\in \{1\}\times (I_{b}\cup[0,b)),\\
0 &\mbox{}otherwise.
\end{cases}$
\end{remark}

\begin{remark}
If taking $e=b$ in Theorem \ref{th34}, then $[b,1]=[e,1]$, $I_{e,b}=I_{e}$, $I_{b}^{e}\cup I_{e}^{b}\cup [b,e)=\emptyset$ and $U^{*}$ is a $t$-conorm on $[b,1]$. Moreover, in this case,
 $U^{*}\in\mathcal{U}_{\top}^{*}$
and  $x\parallel y$ for all $x\in I_{e,b}$ and $y\in I_{e}^{b}$   naturally hold.


By the above fact, if taking $e=b$ in Theorem \ref{th34}, then we retrieve the  uninorm $U_{int}: L^{2}\rightarrow L$ constructed by Zhao and Wu (\cite{BZ21}, Corollary 4.1).

$U_{int}(x,y)=\begin{cases}
S_{e}(x, y) &\mbox{if } (x,y)\in [e,1]^{2},\\
x &\mbox{if } (x,y)\in I_{e}\times [e,1]\cup[e,1]\times[0,e),\\
y &\mbox{if } (x,y)\in [e,1]\times I_{e}\cup [0,e)\times[e,1],\\
int(x)\wedge int(y) &\mbox{if } (x,y)\in I_{e}\times I_{e},\\
0 &\mbox{}otherwise.\\
\end{cases}$
\end{remark}

If we put $int(x)=x$ for $x\in L$ in Theorem \ref{th34}, then we can obtain the following result.

\begin{corollary}\label{coro341}
Let    $U^{*}$ be a uninorm on $[b,1]$ with a neutral element $e$ for  $b\in L\setminus\{0,1\}$.  Let $U_{41}(x,y):L^{2}\rightarrow L$ be a function defined  as follows

$U_{41}(x,y)=\begin{cases}
U^{*}(x, y) &\mbox{if } (x,y)\in [b,1]^{2},\\
x &\mbox{if } (x,y)\in (L\setminus[b,1])\times [e,1],\\
y &\mbox{if } (x,y)\in [e,1]\times (L\setminus[b,1]),\\
x\wedge y &\mbox{if } (x,y)\in I_{e,b}\times I_{e,b},\\
0 &\mbox{}otherwise.\\
\end{cases}$

$(1)$ Suppose that $x\wedge y\in I_{e,b}$ for all $x,y\in I_{e,b}$.

$\mathrm{(i)}$ If $U^{*}\in \mathcal{U}_{\top}^{*}$, then $U_{41}$ is a uninorm on $L$ with the neutral element $e\in L$  if and only if $x\parallel y$ for all $x\in I_{e,b}$ and $y\in I_{e}^{b} $.

$\mathrm{(ii)}$ If $I_{e,b}\cup I_{b}^{e}\cup (0,b)\neq \emptyset$, then $U_{41}$ is a uninorm on $L$ with the neutral element $e\in L$  if and only if $U^{*}\in \mathcal{U}_{\top}^{*}$ and $x\parallel y$ for all $x\in I_{e,b}$ and $y\in I_{e}^{b} $.

$(2)$ Suppose that $x\wedge y\in [0,b)$ for all $x,y\in I_{e,b}$.

$\mathrm{(i)}$  If $x\parallel y$ for all $x\in I_{e,b}, y\in I_{e}^{b} $ and $U^{*}\in \mathcal{U}_{\top}^{*}$, then $U_{41}$ is a uninorm on $L$ with the neutral element $e\in L$.

$\mathrm{(ii)}$ If  $I_{e,b}\cup I_{b}^{e}\cup (0,b)\neq \emptyset$ and $0<x\wedge y$ for all $x,y\in I_{e,b}$, then $U_{41}$ is a uninorm on $L$ with the neutral element $e\in L$ if and only if $x\parallel y$ for all $x\in I_{e,b}, y\in I_{e}^{b} $ and $U^{*}\in \mathcal{U}_{\top}^{*}$.
\end{corollary}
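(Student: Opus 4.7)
The plan is to obtain Corollary \ref{coro341} as a direct specialization of Theorem \ref{th34}, by choosing the interior operator $int$ on $L$ to be the identity map, i.e., $int(x) = x$ for every $x \in L$. First I would verify that the identity is a legitimate interior operator: the three defining axioms $int(x) \le x$, $int(x \wedge y) = int(x) \wedge int(y)$, and $int(int(x)) = int(x)$ are all trivially satisfied. Hence this choice is admissible in Theorem \ref{th34}.

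Next I would substitute $int(x) = x$ into the definition of $U_{4}$ from Theorem \ref{th34}. The only branch that changes is the one on $I_{e,b} \times I_{e,b}$, which becomes $int(x) \wedge int(y) = x \wedge y$. This gives exactly the piecewise definition of $U_{41}$ in the corollary. All remaining branches of $U_{4}$ are independent of $int$ and coincide verbatim with those of $U_{41}$.

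Under this identification, the hypotheses of Theorem \ref{th34} translate literally into those of Corollary \ref{coro341}: the condition $int(x) \wedge int(y) \in I_{e,b}$ for all $x, y \in I_{e,b}$ becomes $x \wedge y \in I_{e,b}$, and the condition $int(x) \wedge int(y) \in [0,b)$ becomes $x \wedge y \in [0,b)$; similarly, the strictness condition $0 < int(x) \wedge int(y)$ becomes $0 < x \wedge y$. The auxiliary conditions $U^{*} \in \mathcal{U}_{\top}^{*}$, $x \parallel y$ for $x \in I_{e,b}$ and $y \in I_{e}^{b}$, and $I_{e,b} \cup I_{b}^{e} \cup (0,b) \neq \emptyset$ are unaffected by the choice of $int$.

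Therefore the four assertions (1)(i), (1)(ii), (2)(i), (2)(ii) of Corollary \ref{coro341} follow immediately from the corresponding assertions of Theorem \ref{th34}. I do not anticipate any technical obstacle here, since the corollary contains no content beyond the specialization; the only point to check carefully is the admissibility of $int(x) = x$ as an interior operator, which is immediate. A one-line proof stating ``apply Theorem \ref{th34} with $int(x) = x$ for all $x \in L$'' suffices.
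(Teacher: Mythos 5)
Your proposal is correct and matches the paper exactly: the paper introduces Corollary \ref{coro341} with the single line ``If we put $int(x)=x$ for $x\in L$ in Theorem \ref{th34}, then we can obtain the following result,'' which is precisely your specialization argument. The only addition you make --- explicitly checking that the identity map satisfies the interior-operator axioms --- is a harmless (and arguably welcome) extra detail.
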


\begin{remark}

If taking $e=b$ in Corollary \ref{coro341}, then we retrieve the following uninorm $U_{s_{2}}: L^{2}\rightarrow L$ constructed by \c{C}ayl{\i} and Kara\c{c}al (\cite{GD17}, Theorem 3.5).

$U_{s_{2}}(x,y)=\begin{cases}
S_{e}(x, y) &\mbox{if } (x,y)\in [e,1]^{2},\\
x &\mbox{if } (x,y)\in I_{e}\times [e,1]\cup[e,1]\times[0,e),\\
y &\mbox{if } (x,y)\in [e,1]\times I_{e}\cup [0,e)\times[e,1],\\
x\wedge y &\mbox{if } (x,y)\in I_{e}\times I_{e},\\
0 &\mbox{}otherwise.\\
\end{cases}$
\end{remark}

If we put $int(x)=x\wedge b$ for $x\in L$ in Theorem \ref{th34}, then we can obtain the following corollary based on Theorem \ref{th34}(2).

\begin{corollary}\label{coro342}
Let $U^{*}$ be a uninorm on $[b,1]$ with a neutral element $e$ for $b\in L\setminus\{0,1\}$. Let $U_{42}(x,y):L^{2}\rightarrow L$ be a function defined  as follows

$U_{42}(x,y)=\begin{cases}
U^{*}(x, y) &\mbox{if } (x,y)\in [b,1]^{2},\\
x &\mbox{if } (x,y)\in (L\setminus[b,1])\times [e,1],\\
y &\mbox{if } (x,y)\in [e,1]\times (L\setminus[b,1]),\\
x\wedge y\wedge b &\mbox{if } (x,y)\in I_{e,b}\times I_{e,b},\\
0 &\mbox{}otherwise.\\
\end{cases}$

$(1)$  If $x\parallel y$ for all $x\in I_{e,b}, y\in I_{e}^{b} $ and $U^{*}\in \mathcal{U}_{\top}^{*}$, then $U_{42}$ is a uninorm  with the neutral element $e\in L$.

$(2)$ If  $I_{e,b}\cup I_{b}^{e}\cup (0,b)\neq \emptyset$ and $0<x\wedge y\wedge b$ for all $x,y\in I_{e,b}$, then $U_{42}$ is a uninorm  with the neutral element $e\in L$ if and only if $x\parallel y$ for all $x\in I_{e,b}, y\in I_{e}^{b} $ and $U^{*}\in \mathcal{U}_{\top}^{*}$.
\end{corollary}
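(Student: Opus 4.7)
The plan is to deduce Corollary \ref{coro342} as an immediate specialization of Theorem \ref{th34}$(2)$ obtained by plugging in $int(x) = x \wedge b$ for all $x \in L$. So my first task is to check that this choice of $int$ is a legitimate interior operator on $L$. The three defining properties are straightforward: $int(x) = x \wedge b \leq x$; $int(x \wedge y) = (x\wedge y)\wedge b = (x\wedge b)\wedge(y\wedge b) = int(x)\wedge int(y)$ by associativity and commutativity of the meet; and $int(int(x)) = (x\wedge b)\wedge b = x\wedge b = int(x)$.

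Next, I would verify that, under this substitution, the function produced by Theorem \ref{th34} coincides with the function $U_{42}$ in the statement. The only branch that requires checking is the one on $I_{e,b}\times I_{e,b}$, where $int(x)\wedge int(y) = (x\wedge b)\wedge(y\wedge b) = x\wedge y\wedge b$, exactly the value prescribed in the definition of $U_{42}$. All other branches of $U_4$ and $U_{42}$ are identical by inspection.

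Then I would verify that the hypothesis of Theorem \ref{th34}$(2)$, namely $int(x)\wedge int(y) \in [0,b)$ for all $x,y \in I_{e,b}$, is automatic. Indeed, for $x \in I_{e,b}$ we have $x \parallel b$, so $x\wedge b \leq b$, and equality would force $b \leq x$, contradicting $x \parallel b$; hence $x\wedge b \in [0,b)$, and similarly for $y$. Therefore $int(x)\wedge int(y) = x\wedge y\wedge b \leq x\wedge b < b$, so it lies in $[0,b)$ as required.

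With these observations, part $(1)$ of the corollary is immediate from Theorem \ref{th34}$(2)\mathrm{(i)}$, and part $(2)$ is immediate from Theorem \ref{th34}$(2)\mathrm{(ii)}$, after noting that the condition $0 < int(x)\wedge int(y)$ appearing there rewrites as $0 < x\wedge y\wedge b$ under the substitution, which is exactly the added assumption in Corollary \ref{coro342}$(2)$. The only step that deserves a genuine (though tiny) argument is the automatic membership $x\wedge b \in [0,b)$ for $x \in I_{e,b}$; everything else is a mechanical rewriting, and no associativity or monotonicity verification needs to be redone, since those are inherited from Theorem \ref{th34}.
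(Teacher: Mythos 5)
Your proposal is correct and follows exactly the route the paper intends: the corollary is obtained by substituting $int(x)=x\wedge b$ into Theorem \ref{th34}$(2)$, and you correctly verify that this is an interior operator, that the resulting operation equals $U_{42}$, and that $x\wedge y\wedge b\in[0,b)$ holds automatically for $x,y\in I_{e,b}$ (since $x\wedge b=b$ would force $b\leq x$, contradicting $x\parallel b$). In fact you supply slightly more detail than the paper, which states the specialization without spelling out these checks.
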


\begin{remark}
If taking $e=b$ in Corollary \ref{coro342}, then we retrieve the following uninorm $U_{s_{3}}: L^{2}\rightarrow L$ constructed by \c{C}ayl{\i} (\cite{GD018b}, Theorem 2.23).

$U_{s_{3}}(x,y)=\begin{cases}
S_{e}(x, y) &\mbox{if } (x,y)\in [e,1]^{2},\\
x &\mbox{if } (x,y)\in (L\setminus [e,1])\times [e,1],\\
y &\mbox{if } (x,y)\in [e,1]\times (L\setminus [e,1]),\\
x\wedge y\wedge e &\mbox{if } (x,y)\in I_{e}\times I_{e},\\
0 &\mbox{}otherwise.\\
\end{cases}$
\end{remark}

\begin{remark}\label{}
$(1)$ In Theorem \ref{th34}$(1)$, we observe that the condition $int(x)\wedge int(y)\in I_{e,b}$ for all $x,y\in I_{e,b} $ can not be omitted, in general.\\
$(2)$ Similarly,  in Theorem \ref{th34}$(2)$,   the condition $int(x)\wedge int(y)\in [0,b)$ for all $x,y\in I_{e,b} $ can not be omitted, in general.
\end{remark}

\begin{remark}\label{}
Let $U_{4}$  be a uninorm in Theorem \ref{th34}.\\
$(1)$ $U_{4}$ is conjunctive, i.e., $U_{4}(0,1)=0$.\\
$(2)$ If $b=0$, then $U_{4}=U^{*}$.\\
$(3)$ $U_{4}$ is not idempotent, in general. In fact, if $(0,b)\neq \emptyset$, then there exists $x$ such that $x\in (0,b)$  and then  $U_{4}(x,x)=0\neq x$.  This is a contradiction with that  $U_{4}$ be idempotent.\\
$(4)$ $U_{4}\in \mathcal{U}_{min}^{*}$ if and only if $U^{*}\in \mathcal{U}_{min}^{*}$.
\end{remark}

\section{Conclusions}

The new construction methods for uninorms on $L$ using  a uninorm defined on a subinterval of $L$  were introduced  in  \cite{GD22} and \cite{ZX23}.   In this paper,  we continue investigating the construction methods based on different tools under some additional constraints.


  We give  some remarks about the results in this paper as follows.

(1) These methods generalize some construction  methods in the literature, such as Theorem 1 in \cite{GD16}, Theorem 1 in \cite{GD018}, Proposition 3.1 and Corollary 4.1 in \cite{BZ21},  Theorem 3.1 and Theorem 3.5 in  \cite{GD17}, Theorem 2.23 in \cite{GD018b}, and also bring some interesting results, such as Propositions \ref{co31} and \ref{co444}.

(2) Although the additional constraint conditions on the given uninorms are always needed for the construction methods, we try to investigate the additional constraints carefully and systematically and  show that some additional constraints  are necessary.

(3)  In \cite{HP21}, Zhang et al. introduced the classes of uninorms ${U}_{min}^{*}$ and $\mathcal{U}_{max}^{*}$. In this paper, we show that whether  $U$ belongs to  $\mathcal{U}_{min}^{*}$ ($\mathcal{U}_{max}^{*}$)  depends on  whether  $U^{*}$ belongs to $\mathcal{U}_{min}^{*}$ ($\mathcal{U}_{max}^{*}$),  where $U$ is constructed based on $U^{*}$.


The new  methods for uninorms on $L$ in this paper  provide  a novel perspective to study the constructions of  uninorms  and  we believe that they can work well to investigate other operators in the future research.

\end{document}